\theoremstyle{plain}
\newtheorem{theorem}{Theorem}
\newtheorem{corollary}[theorem]{Corollary}
\newtheorem{lemma}[theorem]{Lemma}
\newtheorem{proposition}[theorem]{Proposition}
\newtheorem{conjecture}[theorem]{Conjecture}
\theoremstyle{definition}
\newtheorem{definition}[theorem]{Definition}
\newtheorem{example}[theorem]{Example}
\newtheorem{remark}[theorem]{Remark}
\newcommand{\N}{{\mathbb N}}
\newcommand{\wasc}{\mathrm{wasc}}
\newcommand{\asc}{\mathrm{asc}}
\newcommand{\mynewpage}{}
\newcommand{\Wasc}{\mathrm{WAsc}}
\newcommand{\newmatrices}{\mathrm{WMat}}
\newcommand{\sn}{S_n}
\newcommand{\MM}{\Omega}
\newcommand{\ptm}{\Phi}
\newcommand{\PosetMap}{\Psi}
\newcommand{\Fish}{\mathcal{F}}
\newcommand{\RR}{\mathcal{W}}
\newcommand{\LL}{\Gamma}
\newcommand{\sss}{\mathrm{numact}}
\newcommand{\bbb}{\mathrm{lastact}}
\newcommand{\fishpattern}{F}
\newcommand{\wpattern}{W}
\newcommand{\Mat}[1]{\left[\begin{smallmatrix}#1\end{smallmatrix}\right]}
\newcommand{\spone}{\mathrm{topone}}
\newcommand{\reduce}{\mathrm{reduce}}
\newcommand{\madd}{\mathrm{expand}}
\newcommand{\WPoset}{\mathrm{WPoset}}
\newcommand{\desbot}{\mathrm{desbot}}
\newcommand{\emptyword}{\epsilon}
\newcommand{\pattern}[4]{
  \raisebox{0.6ex}{
  \begin{tikzpicture}[scale=0.35, baseline=(current bounding box.center), #1]
    \foreach \x/\y in {#4}
      \fill[pattern=north east lines, pattern color=black!45] (\x,\y) rectangle +(1,1);
    \draw (0.01,0.01) grid (#2+0.99,#2+0.99);
    \foreach \x/\y in {#3}
      \filldraw (\x,\y) circle (5pt);
  \end{tikzpicture}}\;
}
\newcommand{\scl}{1}
\newcommand{\nyip}{\hspace*{0.425em}}
\def\lgrey{0.9}
\def\softgrey{0.8}
\definecolor{lightgrey}{rgb}{\lgrey,\lgrey,\lgrey}
\definecolor{grey}{rgb}{\softgrey,\softgrey,\softgrey}
\newcommand{\myonematrix}{
    \def\mheight{5}
    \def\xdiff{0.5}
    \def\ydiff{1}
    \def\sdiff{0.1}
    \def\mi{2}
    \def\mj{4}
    \def\mip{4}
    \def\mjp{2.25}
    \def\yloola{0.5}
    \def\xloola{0.75}
    \def\mskip{0.25}
    \centerline{ \begin{tikzpicture}[scale=0.5]
    \tikzstyle{disc} = [ rectangle,fill=black,draw=black, minimum size=3.6pt, inner sep=0pt];
    \tikzstyle{wdisc} = [ rectangle,fill=white,draw=black, minimum size=3.6pt, inner sep=0pt];
    \tikzstyle{gdisc} = [ rectangle,fill=white,draw=grey, minimum size=3.6pt, inner sep=0pt];
    \draw[line width=1] (0,0) .. controls (-\xdiff,\ydiff) and (-\xdiff,\mheight-\ydiff) .. (0,\mheight);
    \draw[line width=1] (\mheight,0) .. controls (\mheight+\xdiff,\ydiff) and (\mheight+\xdiff,\mheight-\ydiff) .. (\mheight,\mheight);
    \draw node [disc] (ij) at (\mi,\mj) {};
    \draw node [disc] (ipjp) at (\mip,\mjp) {};
    \draw node [wdisc] (ipj) at (\mi,\mjp) {};
	\node [above=2pt] at (ij) {$i$};
	\node [right=2pt] at (ipjp) {$j$};
    \foreach \x in {1,2,...,19}
        \draw node [gdisc]  at ( 0 + \mskip*\x , \mheight - \mskip*\x ) {};
    \draw (ij) -- (ipj);
    \draw (ipjp) -- (ipj);
\end{tikzpicture} }
}
\title[Weak ascent sequences]{Weak ascent sequences and related\\ combinatorial structures}
\author{Be\'ata B\'enyi}
\address{B. B\'enyi: Faculty of Water Sciences, University of Public Service, Baja, Hungary}
\author{Anders Claesson}
\address{A. Claesson: Science Institute, University of Iceland, Iceland}
\author{Mark Dukes}
\address{M. Dukes: School of Mathematics \& Statistics, University College Dublin, Ireland}
\keywords{Weak ascent sequence; (2+2)-free poset, Bivincular pattern, Pattern avoiding permutation, Enumeration.}
\subjclass[2020]{05A05,05A19,06A11}
\begin{document}
\begin{abstract}
  In this paper we introduce weak ascent sequences, a class of
  number sequences that properly contains ascent sequences.  We show
  how these sequences uniquely encode each of the following objects:
  permutations avoiding a particular length-4 bivincular pattern;
  upper-triangular binary matrices that satisfy a column-adjacency
  rule; factorial posets that are weakly (3+1)-free.
  We also show how weak ascent sequences are related to a class of
  pattern avoiding inversion sequences that has been a topic of recent
  research by Auli and Elizalde. Finally, we consider the problem of
  enumerating these new sequences and give a closed form expression
  for the number of weak ascent sequences having a prescribed length
  and number of weak ascents.
\end{abstract}
\maketitle
\thispagestyle{empty}

\section{Introduction}
Ascent sequences~\cite{BMCDK} are rich number sequences in that they
uniquely encode four different combinatorial objects and thereby
induce bijections between these objects.  These objects are (2+2)-free
posets; Fishburn permutations;
upper-triangular matrices of non-negative integers having neither
columns nor rows of only zeros; and Stoimenow matchings.  Statistics on
those objects have been shown to be related to natural considerations on
the ascent sequences to which they correspond.

In this paper we will define a new sequence that we term a {\it{weak
    ascent}} sequence and study the rich connections these sequences
have to other combinatorial objects
that are similar in spirit to those mentioned above.
Given a sequence of integers
$x=(x_1,\ldots,x_n)$, we say there is a {\it{weak ascent}} at position
$i$ if $x_i \leq x_{i+1}$.  We denote by $\wasc(x)$ the number of weak
ascents in the sequence $x$.
Throughout this paper we will use the notation $[a,b]$ for the set $\{a,a+1,a+2,\ldots,b\}$.

\begin{definition}\label{was-defn}
  We call a sequence of integers $a=(a_1,\ldots,a_n)$ a {\textit{weak
      ascent sequence}} if $a_1=0$ and
  $a_{i+1} \in [0,1+\wasc(a_1,\ldots,a_i)]$ for all $i\in [0,n-1]$.
  Let $\Wasc_n$ be the set of weak ascent sequences of length $n$.
\end{definition}

In Table~\ref{allfourseqs} we list all weak ascent sequences of length
at most four.

\begin{table}[!ht]
  $$\def\arraystretch{1.2}
  \begin{array}{c@{\quad}l}\hline
   n & \Wasc_n \\ \hline \\[-3ex]
   1 & (0) \\
   2 & (0,0), \, (0,1) \\
   3 & (0,0,0), \, (0,0,1),\, (0,0,2),\, (0,1,0),\, (0,1,1),\, (0,1,2) \\
   4 & (0,0,0,0),\, (0,0,0,1),\, (0,0,0,2),\, (0,0,0,3),\, (0,0,1,0),\, (0,0,1,1),\, (0,0,1,2), \\ &
       (0,0,1,3),\, (0,0,2,0),\, (0,0,2,1),\, (0,0,2,2),\, (0,0,2,3),\, (0,1,0,0),\, (0,1,0,1), \\ &
       (0,1,0,2),\, (0,1,1,0),\, (0,1,1,1),\, (0,1,1,2),\, (0,1,1,3),\, (0,1,2,0),\, (0,1,2,1), \\ &
       (0,1,2,2),\, (0,1,2,3) \\[1ex] \hline
  \end{array}
  $$
\caption{All weak ascent sequences of length at most 4.\label{allfourseqs}}
\end{table}

To contrast this with the original ascent sequences, recall that a
sequence of integers $x=(x_1,\ldots,x_n)$ has an {\it{ascent}} at
position $i$ if $x_i < x_{i+1}$.  An \emph{ascent sequence} is a
sequence of integers $a=(a_1,\ldots,a_n)$ with $a_1=0$ and
$a_{i+1} \in [0,1+\asc(a_1,\ldots,a_i)]$ for all $i\in [0,n-1]$, where
$\asc$ denotes the number of ascents in the sequence. Clearly, ascent
sequences are weak ascent sequences.

In this paper we will show how these weak ascent sequences uniquely
encode each of the following objects: permutations avoiding a
particular length-4 bivincular pattern (in
Section~\ref{permssection}); upper-triangular binary matrices that
satisfy a column-adjacency rule (in Section~\ref{matricessection});
factorial posets that contain no {\em weak} (3+1) subposet (in
Section~\ref{posetssection}).  We show in
Section~\ref{inversionssection} how weak ascent sequences are related
to a class of pattern avoiding inversion sequences that has been a
topic of recent research by Auli and
Elizalde~\cite{AuliSergi1,AuliSergi2,elizalde}.  In
Section~\ref{inversionssection} we also consider the problem of
enumerating these new sequences and give a closed form expression for
the number of weak ascent sequences having a prescribed length and
number of weak ascents.

The objects that we study in this paper are summarised in
Figure~\ref{fig:summary}, along with the names of the bijections that
we construct and prove between these objects.  In that diagram we also
include the section numbers where each of the bijections may be found.

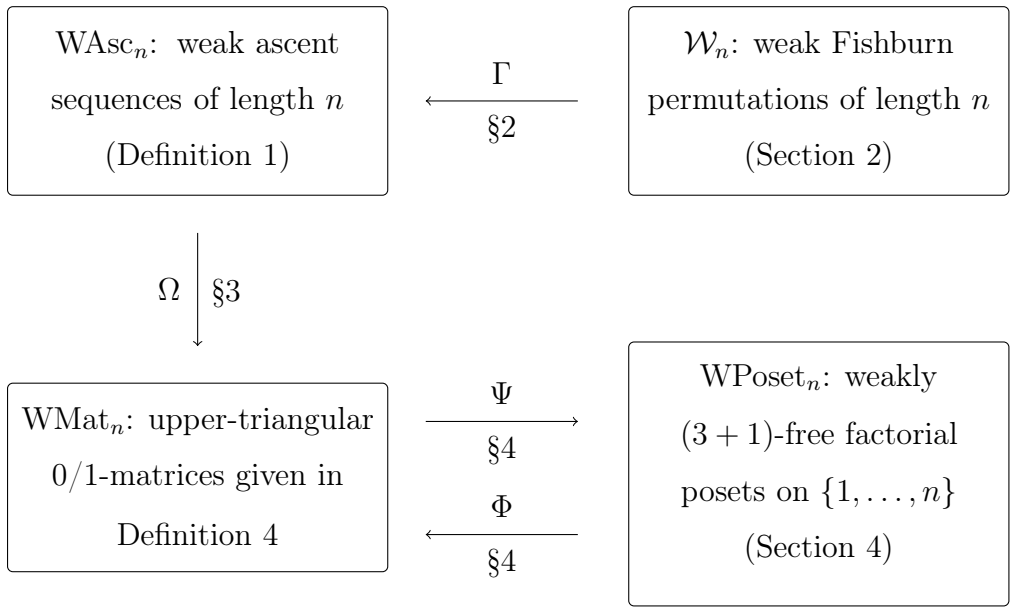
\begin{figure}[!h]
\begin{tikzpicture}[scale=0.5, rounded corners = 2pt]
\begin{scope}
\draw (0,4) node [align=left] {$\Wasc_n$:\, weak ascent};
\draw (0,2.5) node [align=left] {sequences of length $n$};
\draw (0,1) node [align=left] {(Definition~\ref{was-defn})};
\draw (-5,0) rectangle (5,5);
\end{scope}
\begin{scope}[xshift = 90ex]
\draw (0,4) node [align=left] {$\RR_n$: weak Fishburn};
\draw (0,2.5) node [align=left] {permutations of length $n$};
\draw (0,1) node [align=left] {(Section~\ref{permssection})};
\draw (-5,0) rectangle (5,5);
\end{scope}
\begin{scope}[yshift = -55ex]
\draw (0,4) node [align=left]  {$\newmatrices_n$: upper-triangular};
\draw (0,2.5) node [align=left] {0/1-matrices given in};
\draw (0,1) node [align=left] {Definition~\ref{matrixdefn}};
\draw (-5,0) rectangle (5,5);
\end{scope}
\begin{scope}[xshift = 90ex, yshift = -60ex]
\draw (0,6.05) node {$\WPoset_n$: weakly};
\draw (0,4.5) node {$(3+1)$-free factorial};
\draw (0,3.00) node {posets on $\{1,\ldots,n\}$};
\draw (0,1.5) node {(Section~\ref{posetssection})};
\draw (-5,0) rectangle (5,7);
\end{scope}
\draw[->] (0,-1) -- (0,-4);
\draw (-0.75,-2.5) node {$\MM$};
\draw (0.75,-2.6) node {\S3};
\draw[<-] (6,2.5) -- (10,2.5);
\draw (8, 3.25) node {$\LL$};
\draw (8, 1.75) node {\S2};
\draw[->] (6,-6) -- (10,-6);
\draw (8, -5.25) node {$\PosetMap$};
\draw (8, -6.75) node {\S\ref{posetssection}};
\draw[<-] (6,-9) -- (10,-9);
\draw (8, -8.25) node {$\ptm$};
\draw (8, -9.75) node {\S\ref{posetssection}};
\end{tikzpicture}
\caption{Diagrammatic summary of the sets and bijections of interest.}
\label{fig:summary}
\end{figure}

\mynewpage
\section{Weak Fishburn permutations}
\label{permssection}

Let $\sn$ be the set of permutations of the set $\{1,\ldots,n\}$.  Given
a pattern $P$, in the pattern-avoidance literature the convention is to
denote by $\sn(P)$ the set of permutations in $\sn$ that do not contain
the pattern $P$. The set of Fishburn permutations~\cite{BMCDK,
  GilWeiner}, $\Fish_n=\sn(\fishpattern)$, are those that avoid the bivincular
pattern
$$
\fishpattern = \big(\,231,\,[0,3]\!\times\!\{1\}\cup \{1\}\!\times\![0,3]\,\big)
= \pattern{scale=\scl}{3}
   {1/2, 2/3, 3/1}
   {0/1, 1/1, 2/1, 3/1, 1/0, 1/2, 1/3},
$$
here defined and depicted as a mesh pattern~\cite{anderspetter}.  The
inclusion of shaded rows and columns indicates that in an occurrence
of such a pattern in a permutation, there should be no other
permutation dots
in the shaded zones
when this pattern is placed over a
permutation. Bousquet-M\'elou et al.~\cite{BMCDK} gave a bijection
between ascent sequences and Fishburn permutations. More precisely,
ascent sequences encode the so called active sites of the
Fishburn permutations.

We define the bivincular pattern
$$
\wpattern = \big(\,3412,\,[0,4]\!\times\!\{2\}\cup \{1\}\!\times\![0,4]\,\big)
= \pattern{scale=\scl}{4}
   {1/3, 2/4, 3/1, 4/2}
   {1/0, 1/1, 1/2, 1/3,1/4, 0/2, 1/2, 2/2, 3/2, 4/2}
$$
and call $\RR_n=\sn(\wpattern)$ the set of \emph{weak Fishburn
  permutations}.  For the benefit of readers not familiar with
bivincular or mesh patterns we also give an elementary definition of
the \emph{weak Fishburn pattern} $\wpattern$: A permutation
$\pi\in\sn$ contains $\wpattern$ if there are four indices
$1\leq i<j<k<\ell \leq n$ such that $j= i+1$, $\pi_i = \pi_{\ell} +1$
and $\pi_k < \pi_{\ell} < \pi_i < \pi_j$. In this case we also say
that $\pi_i\pi_j\pi_k\pi_{\ell}$ is an \emph{occurrence} of
$\wpattern$ in $\pi$. If there are no occurrences of $\wpattern$ in
$\pi$, then we say that $\pi$ \emph{avoids} $\wpattern$.

If $\pi_i\pi_j\pi_k\pi_{\ell}$ is an occurrence of $\wpattern$ then
$\pi_i\pi_j\pi_{\ell}$ is an occurrence of $\fishpattern$. In other
words, every Fishburn permutation is a weak Fishburn permutation and
we have $\Fish_n\subseteq\RR_n$.

We can construct permutations in $\RR_n$ inductively: Let $\pi$ be a
permutation in $\RR_{n}$ with $n>0$.  Suppose that $\tau$ is obtained
from $\pi$ by deleting the entry $n$. Then $\tau \in \RR_{n-1}$. To
see why this must be the case, let
$\tau_i \tau_{i+1} \tau_k \tau_{\ell}$ be an occurrence of $\wpattern$
in $\tau$ but not in $\pi$. This can only happen if
$\pi_{i+1}=n$. However, this implies that
$\pi_i \pi_{i+1} \pi_{k+1}\pi_{\ell +1}$ is an occurrence of a
$\wpattern$ in $\pi$.

Given $\tau\in \RR_{n-1}$, let us call the
sites where the new maximal value $n$ can be inserted in $\tau$ so as
to produce an element of $\RR_{n}$ {\em{active sites}}.  The site
before $\tau_1$ and the site after $\tau_{n-1}$ are always active.
Determining whether the site between entries $\tau_i$ and $\tau_{i+1}$
is {active} depends on whether $\tau_i\leq 2$ or if there does not
exist $(\tau_i,t,\tau_i -1)$ in $\tau$ with $t<\tau_i - 1$.  This
latter (non-existence) condition is somewhat hard to absorb, so let us
disentangle it as follows.

The site between entries $\tau_i$ and $\tau_{i+1}$ is active if
\begin{itemize}
\item $\tau_i \leq 2$, or
\item $\tau_i -1 $ is to the left of $\tau_i$, or
\item $\tau_i -1$ is to the right of $\tau_i$ and there is no value
  $t<\tau_i -1$ between $\tau_i$ and $\tau_i -1$.
\end{itemize}

With this notion of active sites let us label the active sites, from
left to right, with $\{0,1,2,\dots\}$.

We will now introduce a map $\LL$ from $\RR_{n}$ to $\Wasc_n$, the set
of weak ascent sequences of length $n$, that we then show (in Theorem
\ref{th:mainbiject}) to be a bijection.  This mapping is defined
recursively.  For $n=1$, we define $\LL(1)=(0)$.  Next let $n \ge 2$
and suppose that $\pi \in \RR_{n}$ is obtained by inserting $n$ into
active site labeled $i$ of $\tau \in \RR_{n-1}$.  The sequence
associated with $\pi$ is then $\LL(\pi)=(x_1, \dots, x_{n-1}, i)$,
where $(x_1, \dots,x_{n-1})=\LL(\tau)$.

\begin{example}
  The permutation $\pi = 6 2 7 5 4 1 3 8 $ corresponds to the sequence
  $x=(0,0,2,1,1,0,1,5)$.  It is obtained through the following
  recursive insertion of new maximal values into active sites.  The
  subscripts indicate the labels of the active sites.
  \begin{align*}
    _0 1 _1
    &\,\xrightarrow{x_2=0}\,  {_0} 2 {_1} 1 {_2} \\
    &\,\xrightarrow{x_3=2}\,  {_0} 2 {_1} 1 {_2} 3 {_3} \\
    &\,\xrightarrow{x_4=1}\,  {_0} 2 {_1} 4 \nyip  1 {_2} 3 {_3} \\
    &\,\xrightarrow{x_5=1}\,  {_0} 2 {_1} 5 {_2} 4 \nyip  1 {_3} 3 {_4} \\
    &\,\xrightarrow{x_6=0}\,  {_0} 6 \nyip 2 {_1} 5 {_2} 4 \nyip  1 {_3} 3 {_4} \\
    &\,\xrightarrow{x_7=1}\,  {_0} 6 \nyip 2 {_1} 7 {_2} 5 {_3} 4 \nyip  1 {_4} 3 {_5} \\
    &\,\xrightarrow{x_8=5}\,  \nyip 6 \nyip 2 \nyip 7 \nyip 5 \nyip 4 \nyip  1 \nyip 3 \nyip 8.
  \end{align*}
  In our terminology, we thus have
  $\LL(6, 2, 7, 5, 4, 1, 3, 8) = (0,0,2,1,1,0,1,5)$.
\end{example}

\begin{theorem}\label{th:mainbiject}
  \label{pwa}
  The map $\LL$ is a bijection from $\RR_{n}$ to $\Wasc_n$. 
	Furthermore, given $\pi \in \RR_n$ with $x=(x_1,\ldots,x_n) =\LL(\pi)$, 
	$\wasc(x) =\sss(\pi)$ and $x_n = \bbb(\pi)$ where $\sss$ is the number 
	of active sites in $\pi$ and $\bbb$ is the label of the site located just before the
  	largest entry of $\pi$.
\end{theorem}

\begin{proof}
  The integer sequence $\LL(\pi)$ encodes the construction of the
  $\wpattern$-avoiding permutation $\pi$ so the map $\LL$ is injective.
  In order to prove that $\LL$ is bijective we must show that the
  image $\LL(\RR_n)$ is the set $\Wasc_n$.  Let $\sss(\pi)$ be the
  number of active sites in the permutation $\pi$.  The recursive
  description of the map $\LL$ tells us that
  $x=(x_1, \dots,x_n)\in \LL(\RR_n)$ if and only if
  \begin{equation}\label{description}
    x'=(x_1, \dots, x_{n-1})\in \LL(\RR_{n-1})
    \quad\text{and}\quad 0\le x_n \le \sss\bigl(\LL^{-1}( x')\bigr)-1
  \end{equation}
  Recall that the leftmost active site is labeled 0 and the rightmost
  active site is labeled $\sss(\pi)-1$.  We will now prove by
  induction (on $n$) that for all $\pi \in \RR_n$, with associated
  sequence $\LL(\pi)=x=(x_1, \dots, x_n)$, one has
  \begin{equation}\label{properties}
    \sss(\pi)=2+\wasc(x)\quad\text{and}\quad \bbb(\pi)= x_n,
  \end{equation}
  where $\bbb(\pi)$ is the label of the site located just before the
  largest entry of $\pi$.  This will convert the above
  description~\eqref{description} of $ \LL(\RR_n)$ into the definition
  of weak ascent sequences, thereby concluding the proof.

  Let us focus on the properties~\eqref{properties}.  It is
  straightforward to see that they hold for $n=1$.  Next let us assume
  they hold for some $n-1$ where $n\ge 2$.  Let $\pi \in \RR_n$ be
  obtained by inserting $n$ into the active site labeled $i$ of
  $\tau \in \RR_{n-1}$.  Then $\LL(\pi)= x=(x_1, \dots, x_{n-1},i)$
  where $\LL(\tau)= x'=(x_1, \dots, x_{n-1})$.

  Every entry of the permutation $\pi$ that is smaller than $n$ is
  followed in $\pi$ by an active site if and only if it was followed
  in $\tau$ by an active site.  The leftmost site in $\pi$ also
  remains active.  The label of the active site preceding $n$ in $\pi$
  is $i=x_n$, and this proves the second property.  In order to
  determine $\sss(\pi)$, we must see whether the site following $n$ is
  active in $\pi$.  There are three cases to consider.  Before doing
  this recall that, by the induction hypothesis, we have
  $\sss(\tau)=2+\wasc(x')$ and $\bbb(\tau)=x_{n-1}$.

  {\it Case 1.} If $0\leq i< \bbb(\tau)=x_{n-1}$ then
  $\wasc(x)=\wasc(x')$ and the entry $n$ in $\pi$ is to the left of
  $n-1$ and there is at least one element in-between these.  This
  `in-between' element must be $<n-1$, so the site after $n$ in $\pi$
  cannot be active since it would lead to the creation of a
  $\wpattern$-pattern.  The number of active sites remains unchanged
  and $\sss(\pi) = \sss(\tau)=2+\wasc(x')=2+\wasc(x)$.

  {\it Case 2.} If $i=\bbb(\tau)=x_{n-1}$ then
  $\wasc(x)=1+\wasc(x')$ and the entry $n$ in $\pi$ is immediately to
  the left of $n-1$ in $\pi$.  Furthermore, there are no elements
  in-between $n$ and $n-1$.  The site that follows $n$ is therefore
  active, and $\sss(\pi ) = 1+\sss(\tau)=3+\wasc(x')=2+\wasc(x)$.

  {\it Case 3.} If $i>\bbb(\tau)=x_{n-1}$ then
  $\wasc(x)=1+\wasc(x')$ and the entry $n$ in $\pi$ is to the right of
  $n-1$.  The site that follows $n$ is therefore active, and
  $\sss(\pi ) = 1+\sss(\tau)=3+\wasc(x')=2+\wasc(x)$.
\end{proof}

\mynewpage
\section{A class of upper-triangular binary matrices}
\label{matricessection}
Dukes and Parviainen~\cite{DP} showed how the set of upper triangular
integer matrices whose entries sum to $n$ and which contain no zero
rows or columns are in one-to-one correspondence with ascent
sequences.  A property of that correspondence is that the number of
ascents in an ascent sequence equals the dimension of the
corresponding matrix less one, while the depth of the first non-zero entry in
the rightmost column corresponds to one plus the final entry of the
ascent sequence.

In this section we will present a similar construction for weak ascent
sequences.  This correspondence is different to
\cite{DP} in that the matrix entries are binary and rows of zeros will
be allowed.  The reason for this is that we would like the dimension
of a matrix to be one more than the number of weak ascents in the corresponding
weak ascent sequence. Further to this, and in keeping with the spirit
of \cite{DP}, we also wish to preserve the second property ``the depth
of the first non-zero entry in the rightmost column corresponds to the
final entry of the weak-ascent sequence''.

Let us first define the class of matrices we will be interested in and
then present the correspondence between these matrices and weak ascent
sequences. The notation $\dim(A)$ refers to the dimension of the
matrix $A$.

\begin{definition}\label{matrixdefn}
  Let $\newmatrices_n$ be the set of upper triangular square
  0/1-matrices $A$ that satisfy the following properties:
  \begin{enumerate}
  \item[(a)] There are $n$ 1s in $A$.
  \item[(b)] There is at least one 1 in every column of $A$.
  \item[(c)] For every pair of adjacent columns, the topmost 1 in the
    left column is weakly above the bottommost 1 in the right column.
\end{enumerate}
\end{definition}

All of the matrices in $\newmatrices_1,\ldots,\newmatrices_4$ are
shown in Table~\ref{allfourmatrices}.

\begin{table}[!ht]
$$
\def\arraystretch{1.3}
\begin{array}{c@{\quad}l} \hline
n & \newmatrices_n\\ \hline\\[-2.5ex]
1 & \Mat{1}\\[1ex]
2 & \Mat{1 & 1\\ 0 & 0}\!,\,
    \Mat{1 & 0\\ 0 & 1}\\[1.5ex]
3 & \Mat{1&1\\ 0&1}\!,\,
    \Mat{1&1&1\\ 0&0&0\\ 0&0&0}\!,\,
    \Mat{1&1&0\\ 0&0&1\\ 0&0&0}\!,\,
    \Mat{1&1&0\\ 0&0&0\\ 0&0&1}\!,\,
    \Mat{1&0&0\\ 0&1&1\\ 0&0&0}\!,\,
    \Mat{1&0&0\\ 0&1&0\\ 0&0&1}\\[2.6ex]
4 & \Mat{1&1&1\\ 0&0&1\\ 0&0&0}\!,\,
    \Mat{1&1&1\\ 0&0&0\\ 0&0&1}\!,\,
    \Mat{1&1&0\\ 0&0&1\\ 0&0&1}\!,\,
    \Mat{1&1&1\\ 0&1&0\\ 0&0&0}\!,\,
    \Mat{1&1&0\\ 0&1&1\\ 0&0&0}\!,\,
    \Mat{1&1&0\\ 0&1&0\\ 0&0&1}\!,\,
    \Mat{1&0&1\\ 0&1&1\\ 0&0&0}\!,\,
    \Mat{1&0&1\\ 0&1&0\\ 0&0&1}\!,\,
    \Mat{1&0&0\\ 0&1&1\\ 0&0&1}\!,\\[1.9ex]
  & \Mat{1&1&1&1\\ 0&0&0&0\\ 0&0&0&0\\ 0&0&0&0}\!,\,
    \Mat{1&1&1&0\\ 0&0&0&1\\ 0&0&0&0\\ 0&0&0&0}\!,\,
    \Mat{1&1&1&0\\ 0&0&0&0\\ 0&0&0&1\\ 0&0&0&0}\!,\,
    \Mat{1&1&1&0\\ 0&0&0&0\\ 0&0&0&0\\ 0&0&0&1}\!,\,
    \Mat{1&1&0&0\\ 0&0&1&1\\ 0&0&0&0\\ 0&0&0&0}\!,\,
    \Mat{1&1&0&0\\ 0&0&1&0\\ 0&0&0&1\\ 0&0&0&0}\!,\,
    \Mat{1&1&0&0\\ 0&0&1&0\\ 0&0&0&0\\ 0&0&0&1}\!,\\[2.4ex]
  & \Mat{1&1&0&0\\ 0&0&0&0\\ 0&0&1&1\\ 0&0&0&0}\!,\,
    \Mat{1&1&0&0\\ 0&0&0&0\\ 0&0&1&0\\ 0&0&0&1}\!,\,
    \Mat{1&0&0&0\\ 0&1&1&1\\ 0&0&0&0\\ 0&0&0&0}\!,\,
    \Mat{1&0&0&0\\ 0&1&1&0\\ 0&0&0&1\\ 0&0&0&0}\!,\,
    \Mat{1&0&0&0\\ 0&1&1&0\\ 0&0&0&0\\ 0&0&0&1}\!,\,
    \Mat{1&0&0&0\\ 0&1&0&0\\ 0&0&1&1\\ 0&0&0&0}\!,\,
    \Mat{1&0&0&0\\ 0&1&0&0\\ 0&0&1&0\\ 0&0&0&1}\\[2.4ex] \hline
\end{array}
$$
\caption{Matrices in our class of interest.\label{allfourmatrices}}
\end{table}

Our goal is to present and prove a bijection between the set of
  weak ascent sequences and the set of the matrices we have just
  introduced.  First, we give a recursive description of the bijection
  that allows us to prove some local properties of the correspondence
  in a simple way.  Following this we present a more direct definition
  of the bijection that involves a decomposition of the weak ascent
  sequence into decreasing subsequences.

Given a matrix $A\in\newmatrices$, let $\spone(A)$ be the index such
that $A_{\spone(A),\dim(A)}=1$ is the topmost 1 in the rightmost
column of $A$.  Such a value always exists since, by Definition~\ref{matrixdefn}(b), there
is at least one 1 in every column of $A$.  Let us define
$$
\reduce(A) = \bigl(B,\,\spone(A)-1\bigr)
$$
where $B$ is a copy of $A$ except that $B_{\spone(A),\dim(A)}=0$ (this
entry was 1 in $A$), and if this results in $B$ having a final column
of all zeros then we delete that column and the bottommost row so that
$B$ has dimension 1 less than $A$.

\begin{lemma}
  If  $A \in \newmatrices_n$, with $n\geq 2$, and
  $\reduce(A)=\bigl(B,\spone(A)-1\bigr)$, then $B\in\newmatrices_{n-1}$.
\end{lemma}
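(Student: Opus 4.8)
The plan is to set $m=\dim(A)$ and $p=\spone(A)$, so that $A_{p,m}=1$ is the topmost $1$ in the last column of $A$, and then to check directly that $B$ satisfies conditions (a), (b) and (c) of Definition~\ref{matrixdefn}. Throughout I would use two elementary consequences of $A$ being upper triangular: the only entry of row $m$ that can equal $1$ is $A_{m,m}$, and every $1$ in a column $j\le m-1$ lies in one of the rows $1,\dots,m-1$. It is also immediate that $B$ is a square $0/1$ upper-triangular matrix, since $\reduce$ only turns a single $1$ into a $0$ and then possibly deletes the last row and the last column. After these preliminaries I would split into two cases according to the number of $1$s in the last column of $A$.

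\emph{Case 1: column $m$ of $A$ contains exactly one $1$.} Then zeroing $A_{p,m}$ empties column $m$, so $\reduce$ deletes the last row and column, and $B$ is precisely the restriction of $A$ to its leading $(m-1)\times(m-1)$ submatrix. By upper triangularity every one of the $n-1$ ones of $A$ outside column $m$ already lies inside this submatrix, so $B$ has $n-1$ ones, which is (a). For (b) and (c) the key observation is that each column of $B$, and hence each adjacent pair of columns of $B$, is a column (respectively pair of columns) of $A$ carrying exactly the same $1$s — so the topmost and bottommost $1$s are unchanged — and therefore (b) and (c) for $B$ follow from (b) and (c) for $A$.

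\emph{Case 2: column $m$ of $A$ contains at least two $1$s.} Then after zeroing $A_{p,m}$ the last column still has a $1$, so nothing is deleted and $B$ is $A$ with the single entry $B_{p,m}$ flipped from $1$ to $0$. Property (a) is immediate; (b) holds because columns $1,\dots,m-1$ are untouched and column $m$ still contains a $1$. For (c), the only adjacent pair of columns of $B$ that could differ from $A$ is $(m-1,m)$, and there we only deleted the \emph{topmost} $1$ of column $m$; since that column had at least two $1$s, its \emph{bottommost} $1$ is unchanged, so condition (c) for this pair is literally condition (c) for $A$.

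The part demanding care, rather than real difficulty, is this interaction between $\reduce$ and condition (c): one has to notice that upper triangularity guarantees that every $1$ relevant to an adjacency constraint of $B$ survives the deletion of the bottommost row, and that deleting the topmost $1$ of the last column never disturbs a bottommost $1$. The rest is routine bookkeeping, and the hypothesis $n\ge 2$ is only needed to ensure that $B$ is a nonempty (dimension $\ge 1$) matrix in $\newmatrices_{n-1}$.
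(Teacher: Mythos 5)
Your proof is correct and follows essentially the same route as the paper's: verify properties (a)--(c) of Definition~\ref{matrixdefn} by splitting into the two cases of whether the rightmost column of $A$ has exactly one $1$ (column and bottom row deleted) or more than one (only the topmost $1$ flipped). The only difference is that you spell out explicitly the upper-triangularity argument showing no $1$s are lost when the last row is deleted, which the paper leaves implicit.
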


\begin{proof}
  Suppose $n$ and $A$ are as stated and
  $\reduce(A) = (B,\spone(A)-1)$.  Let us first observe that the
  number of 1s in $B$ is one less than the number of 1s in $A$, and is
  $n-1$. This shows property (a) of Definition~\ref{matrixdefn} is
  satisfied. Since $A\in\newmatrices_n$ there is at least one 1 in
  every column of $A$, let us consider what happens in the reduction
  from $A$ to $B$.  If there was a single 1 in the rightmost column of
  $A$, then it is removed along with that column and bottom row so
  that there is at least one 1 in every column of $B$.  Alternatively,
  if there was more than one 1 in the rightmost column of $A$, then
  changing the 1 at position $(\spone(A),\dim(A))$ to 0 will still
  ensure there is at least one more 1 in that column. This shows
  property (b) of Definition~\ref{matrixdefn} is satisfied.

  Showing that property (c) in Definition~\ref{matrixdefn} is
  preserved is a little bit more delicate.  Notice that in terms of
  our reduction we need only consider property (c) and how things
  change in terms of the rightmost two columns.  If there was only one
  1 in the rightmost column of A, then that column will not appear in
  $B$ so property (c) certainly holds true in this case.  If there is
  more than one 1 in the rightmost column of $A$, then removing it
  does not change the depth of the bottommost one in that column, so
  property (c) will still hold true.  In both cases, property (c)
  still holds.  Therefore $B \in \newmatrices_{n-1}$.
\end{proof}

Next we will define a matrix insertion operation that is complementary
to the removal operation $\reduce$.

\begin{definition}\label{defmatadd}
  Given a matrix $A \in \newmatrices_n$ and an integer
  $i\in [0,\dim(A)]$, let us define $\madd(A,i)$ as follows.
  \begin{enumerate}
  \item[(a)] If $i<\spone(A)-1$ then let
    $\madd(A,i)$ be the matrix $A$ with $A_{i+1,\dim(A)}$ changed from
    0 to 1.
  \item[(b)] If $i\geq \spone(A)-1$ then let
    $\madd(A,i)$ be the matrix $A$ with a new column of 0s added to
    the right and a new row of 0s appended to the bottom.  Then
    change the 0 at position $(i+1,\dim(A)+1)$ in $\madd(A,i)$ to 1.
  \end{enumerate}
\end{definition}

To illustrate Definition~\ref{defmatadd} let
$$
  A = \Mat{ \vrule width 0pt height 6pt
		1&1&0&1&0&0 \\
         0&0&1&1&1&0 \\
         0&0&1&0&0&0 \\
         0&0&0&0&0&1 \\
         0&0&0&0&0&0 \\
         0&0&0&0&0&1}\!.
$$
Then\\[-4.5ex]
$$\hspace{-3ex}
  \madd(A,2) =
   \Mat{	 \vrule width 0pt height 6pt
		1&1&0&1&0&0 \\
        0&0&1&1&1&0 \\
        0&0&1&0&0&\mathbf{1} \\
        0&0&0&0&0&1 \\
        0&0&0&0&0&0 \\
        0&0&0&0&0&1}\\
      \,\text{ and }\;
   \madd(A,4) =
   \Mat{ \vrule width 0pt height 8pt
		1&1&0&1&0&0&\mathbf{0} \\
        0&0&1&1&1&0&\mathbf{0} \\
        0&0&1&0&0&0&\mathbf{0} \\
        0&0&0&0&0&1&\mathbf{0} \\
        0&0&0&0&0&0&\mathbf{1} \\
        0&0&0&0&0&1&\mathbf{0} \\
        \mathbf{0}&\mathbf{0}&\mathbf{0}&\mathbf{0}&\mathbf{0}&\mathbf{0}&\mathbf{0}}
		\vrule width 0pt height 6pt
      \!.
$$

\begin{lemma}\label{lemma2}
  Let $n \geq 2$ and $B \in \newmatrices_{n-1}$. Let
  $i \in [0,\dim(A)]$ and define $A=\madd(B,i)$.  Then
  $A \in \newmatrices_{n}$ and $\spone(A)=i+1$.
\end{lemma}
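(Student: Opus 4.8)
The plan is to check directly that $A=\madd(B,i)$ satisfies properties (a)--(c) of Definition~\ref{matrixdefn} and that the topmost $1$ of its rightmost column lies in row $i+1$; since $\madd$ is defined by the two-way case split of Definition~\ref{defmatadd}, I will argue case by case. (The hypothesis is to be read as $i\in[0,\dim(B)]$, since $A$ is only introduced after $i$.) In both cases $A$ is obtained from $B$ by inserting exactly one new $1$ into the rightmost column of $A$ and otherwise only appending zero rows and columns, so property (a) is immediate: $B$ has $n-1$ ones, hence $A$ has $n$. Upper-triangularity of $A$ needs only a one-line check in each case, as the new $1$ is placed in row $i+1$ and in a column whose index is at least $i+1$.

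First I would treat case (a), where $i<\spone(B)-1$. Then $i+1<\spone(B)$, so the entry $(i+1,\dim(B))$ of $B$ lies strictly above the topmost $1$ of the rightmost column and is therefore $0$; switching it to $1$ is legitimate and makes row $i+1$ the position of the topmost $1$ in the rightmost column of $A$, which is the assertion $\spone(A)=i+1$. No $1$ was deleted, so every column of $A$ still contains a $1$, giving (b). For (c), the only pair of adjacent columns whose relevant data could change is the last two: the topmost $1$ of the penultimate column is untouched, and the bottommost $1$ of the last column is untouched as well (we only inserted a $1$ above the pre-existing ones), so the inequality in (c) is preserved; every other adjacent pair is literally unchanged.

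Next I would treat case (b), where $i\ge\spone(B)-1$. Now $\dim(A)=\dim(B)+1$, and the new rightmost column of $A$ contains exactly one $1$, in row $i+1$; it is therefore simultaneously the topmost and the bottommost $1$ of that column, which gives (b) for the new column and $\spone(A)=i+1$, while all columns inherited from $B$ still carry a $1$. For (c) the only adjacency to re-examine is that of column $\dim(B)$ with the new column $\dim(B)+1$: the topmost $1$ of column $\dim(B)$ is in row $\spone(B)$, the unique (hence bottommost) $1$ of column $\dim(B)+1$ is in row $i+1$, and the inequality $i\ge\spone(B)-1$ defining this case is exactly $\spone(B)\le i+1$, i.e.\ the topmost $1$ in the left column is weakly above the bottommost $1$ in the right column, as (c) requires; all earlier adjacencies are inherited unchanged from $B$.

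The verification of property (c) is the only mildly delicate point, and the observation that makes it routine is that $\madd$ disturbs only the rightmost one or two columns of $A$, so (c) must be re-checked for at most a single pair of adjacent columns — and in case (b) that pair's requirement is precisely the condition defining the case. Combining the two cases yields $A\in\newmatrices_n$ together with $\spone(A)=i+1$.
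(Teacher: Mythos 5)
Your proof is correct and follows essentially the same route as the paper: verify properties (a)--(c) of Definition~\ref{matrixdefn} by splitting into the two cases $i<\spone(B)-1$ and $i\ge\spone(B)-1$ of Definition~\ref{defmatadd}, observing in each case that only the last one or two columns are affected and that in the second case the inequality defining the case is exactly condition (c). Your added remarks on upper-triangularity and on reading the hypothesis as $i\in[0,\dim(B)]$ are minor clarifications the paper leaves implicit.
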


\begin{proof}
  Let $n$, $i$, and $B$ be as stated in the lemma.  In
  Definition~\ref{defmatadd}, the two operations (a) and (b) increase
  the number of 1s in the matrix by 1, so the number of 1s in the
  matrix $A=\madd(B,i)$ will be $n$.  This means matrix $A$ satisfies
  Definition~\ref{matrixdefn}(a).  Similarly, if addition rule (a) is
  used then the number of 1s in a column of $A$ is at least as many as
  in $B$, so $A$ satisfies Definition~\ref{matrixdefn}(b).  If
  addition rule (b) is used then the new column that appears has
  precisely one 1, so again $A$ satisfies
  Definition~\ref{matrixdefn}(b).  Let us now consider the cases
  $i<\spone(B)-1$ and $i\ge \spone(B)-1$ separately.

  If $i<\spone(B)-1$ then matrix $A$
  is created by inserting a 1 into position $(i+1,\dim(B))$ of $B$
  which is above the topmost 1 in that column.  Consequently in the
  new matrix $A$ one has $\spone(A) = i+1$.  Furthermore, the
  positions of the topmost 1 in the second to last column and the
  bottommost 1 in the final column remain unchanged, so $A$
  satisfies Definition~\ref{matrixdefn}(c).

  If $i\ge \spone(B)-1$ then $A$ is created from $B$ by adding a new
  column and row, and inserting a 1 at position
  $(i+1,\dim(B)+1)$. Notice that the topmost 1 in column $\dim(B)$ is
  at position $(\spone(B),\dim(B))$. The bottommost 1 in the final
  column of $A$ is now at position $(i+1,\dim(B)+1)$.  Since
  $\spone(B) -1 \leq i$ we have $\spone(B) \leq i+1$ and again $A$
  satisfies Definition~\ref{matrixdefn}(c).
\end{proof}

\begin{lemma}
  Let $B \in \newmatrices_n$ and let $i \in [0,\dim(B)]$. Then
  $$\reduce\bigl(\madd(B,i)\bigr) = (B,i)
  $$
  and, if $n\geq 2$,
  $$
  \madd\bigl(\reduce(B)\bigr) = B.
  $$
\end{lemma}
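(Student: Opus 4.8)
The plan is to verify the two claimed identities by unwinding the definitions of $\reduce$ and $\madd$ and checking that the case-splits in the two operations are exactly complementary. By Lemma~\ref{lemma2}, the operation $\madd$ maps $(B,i)$ with $B\in\newmatrices_n$ and $i\in[0,\dim(B)]$ to a matrix $A=\madd(B,i)\in\newmatrices_{n+1}$ with $\spone(A)=i+1$; this single fact already does most of the work for the first identity. First I would recall that $\reduce(A)=(C,\spone(A)-1)$ where $C$ is $A$ with the entry $(\spone(A),\dim(A))$ changed from $1$ to $0$, followed by deletion of the last column and bottom row if that column became all-zero. Applying this with $A=\madd(B,i)$ and using $\spone(A)=i+1$, the second coordinate of $\reduce(\madd(B,i))$ is $\spone(A)-1=i$, as required; so it only remains to see that the first coordinate is $B$.

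For the first-coordinate check I would split according to the case used in $\madd$. If $i<\spone(B)-1$, then $A$ equals $B$ with a $1$ inserted at position $(i+1,\dim(B))$, and since the rightmost column of $A$ then has at least two $1$s (the inserted one at depth $i+1$ and the original topmost one at depth $\spone(B)>i+1$), erasing the topmost $1$ of that column — which is the one just inserted — returns $B$ with no column deletion; so $C=B$. If instead $i\ge\spone(B)-1$, then $A$ is $B$ with a fresh all-zero column and row appended and a single $1$ placed at $(i+1,\dim(B)+1)$; that $1$ is the unique $1$ of the last column, so $\reduce$ erases it and then deletes the (now all-zero) last column and bottom row, recovering $B$ exactly. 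In both cases $\reduce(\madd(B,i))=(B,i)$.

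For the second identity, I would start from $B\in\newmatrices_n$ with $n\ge2$, write $\reduce(B)=(C,\spone(B)-1)$, and compute $\madd(C,\spone(B)-1)$, aiming to get $B$ back. Here the relevant alternative inside $\reduce$ is whether the rightmost column of $B$ contained exactly one $1$ or more than one. If it contained more than one $1$, then $C$ is $B$ with the $(\spone(B),\dim(B))$ entry zeroed, no column deleted, so $\dim(C)=\dim(B)$ and $\spone(C)$ is the depth of the \emph{next} $1$ down in that column, which is strictly greater than $\spone(B)$; hence $\spone(B)-1<\spone(C)-1$, so $\madd(C,\spone(B)-1)$ uses rule (a) and reinserts a $1$ at $(\spone(B),\dim(C))=(\spone(B),\dim(B))$, giving back $B$. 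If the rightmost column of $B$ contained exactly one $1$, then $C$ has that column and the bottom row deleted, so $\dim(C)=\dim(B)-1$ and $\spone(B)-1=\dim(B)-1\ge\dim(C)$; but also $\spone(B)-1\ge\spone(C)-1$ whenever $\spone(C)\le\dim(C)+1$, which always holds, so rule (b) of $\madd$ applies: it appends a new column and row (restoring dimension $\dim(B)$) and places a $1$ at $(\spone(B),\dim(B))$, which is exactly the entry that was removed — so again we recover $B$.

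I expect the main obstacle to be bookkeeping rather than conceptual difficulty: one must be careful that in the ``exactly one $1$'' branch of the first identity the index $i=\spone(B)-1$ indeed falls into rule~(b) of $\madd$ and not rule~(a), and dually that in the ``more than one $1$'' branch of the second identity the strict inequality $\spone(B)-1<\spone(C)-1$ really does hold (it does, because $\spone(C)$ is the position of a $1$ strictly below row $\spone(B)$). Once these two boundary comparisons are pinned down, the rest is a direct substitution. I would also note explicitly that all intermediate objects lie in the right sets — $\madd(B,i)\in\newmatrices_{n+1}$ by Lemma~\ref{lemma2}, and $C=\reduce(B)_1\in\newmatrices_{n-1}$ by the first lemma of this section — so that $\spone$ is always defined where we invoke it; with that, the two displayed equalities follow.
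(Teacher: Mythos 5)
Your handling of the first identity, and of the ``more than one 1'' branch of the second, is correct and follows essentially the same route as the paper: Lemma~\ref{lemma2} gives $\spone(\madd(B,i))=i+1$, so the second coordinate is $i$, and your case analysis of which rule of Definition~\ref{defmatadd} was used recovers $B$. The genuine gap is in the ``exactly one 1'' branch of the second identity. There you assert $\spone(B)-1=\dim(B)-1$, i.e.\ that the lone 1 in the rightmost column of $B$ sits in the bottom-right corner, and you use this to argue that rule (b) of $\madd$ applies. That equality is false in general: for instance $B=\Mat{1&1\\0&0}\in\newmatrices_2$ has exactly one 1 in its last column, yet $\spone(B)=1$ while $\dim(B)=2$. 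Consequently the chain ``$\spone(B)-1\ge\dim(C)$, hence $\ge\spone(C)-1$'' does not get off the ground, and as written the choice of rule (b) in this branch rests on a false premise.

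What is actually needed, and what the paper uses, is Definition~\ref{matrixdefn}(c) applied to the last two columns of $B$: the topmost 1 of column $\dim(B)-1$ (which becomes the rightmost column of $C$ after the deletion) lies weakly above the bottommost 1 of column $\dim(B)$, and since that column contains only one 1, this bottommost 1 is in row $\spone(B)$. Hence $\spone(C)\le\spone(B)$, so $i=\spone(B)-1\ge\spone(C)-1$ and rule (b) of $\madd$ indeed applies; the remainder of your computation (the new 1 lands at position $(\spone(B),\dim(B))$ of the enlarged matrix, restoring $B$) is then fine. So the defect is local and easily repaired, but the justification you give for the key boundary comparison in this branch is the one step that would fail.
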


\begin{proof}
  Let $A=\madd(B,i)$. From Lemma~\ref{lemma2} we have $\spone(A)=i+1$
  and the removal operation when applied to $A$ will yield $(C,i)$ for
  some matrix $C$. We need to show $B=C$ for the two different cases
  of Definition~\ref{defmatadd}.  Suppose that $i<\spone(B)-1$. Then
  $A$ is a copy of $B$ with a new 1 at position $(i+1,\dim(B))$, which
  becomes the topmost one in that column.  The reduction operation
  applied to $A$ removes that topmost one in the rightmost column and
  the resulting matrix is $C=B$. A similar argument holds for the case
  $i \ge \spone(B)-1$. This establishes the first part of our lemma.

  Suppose $B \in \newmatrices_n$ is a matrix that has only one
  $1$-entry in the last column.  Then $\reduce(B)= (C, i)$, where $C$ is
  the matrix that we obtain by deleting the rightmost column and
  bottommost row of $B$ and $i = \spone(B)-1$.  By the property (c) of
  Definition~\ref{matrixdefn} $\spone(C)\leq \spone(B)$. So,
  $i\geq \spone(C)-1$ and by Definition~\ref{defmatadd}(b) we have
  that the matrix $A=\madd(C,i)$ is the matrix that we obtain by
  appending a column with a single $1$ in row $i+1$ to the right and
  an all-zeros row to the bottom of $C$. Hence $A=B$.  Suppose now
  that $B \in \newmatrices_n$ is a matrix with more than one $1$ in
  the rightmost column.  Then $\reduce(B)= (C, i)$, where $C$ is the
  matrix that we obtain by exchanging the topmost $1$ in the rightmost
  column to $0$ and $i = \spone(B)-1$.  Note that due to this we have
  $\spone(C)>\spone(B)$ and $i<\spone(C)-1$.  By
  Definition~\ref{defmatadd}(a) $A=\madd(C,i)$ is the matrix that we
  obtain by changing the $0$ in the $(i+1)$th row in the rightmost
  column of $C$ to $1$, hence we have $A=B$.
\end{proof}

Let us now define a mapping $\MM$ from $\newmatrices_n$ to integer
sequences of length $n$.

\begin{definition}
  For $n=1$ let $\MM(\Mat{1}) = (0)$.  Now let $n\geq 2$ and suppose
  that the removal operation, when applied to $A\in \newmatrices_n$
  gives $\reduce(A)=(B,i)$.  Then the sequence associated with $A$ is
  $\MM(A) = (x_1,\ldots,x_{n-1},i)$ where
  $(x_1,\ldots,x_{n-1}) = \MM(B)$.
\end{definition}

\begin{theorem}
  The mapping $\MM: \newmatrices_n\to\Wasc_n$ is a bijection.
\end{theorem}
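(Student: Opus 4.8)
The plan is to follow the same strategy as in the proof of Theorem~\ref{th:mainbiject}. First I would note that $\MM$ is injective: the preceding lemma shows that $\reduce$ and $\madd$ are mutually inverse, so the sequence $\MM(A) = (x_1,\dots,x_n)$ determines $A$ by reading it from right to left. Indeed, if $n = 1$ then $A = \Mat{1}$; if $n \ge 2$ then $\reduce(A) = (B, x_n)$, where $B$ is the unique matrix in $\newmatrices_{n-1}$ with $\MM(B) = (x_1,\dots,x_{n-1})$ (recovered inductively), and hence $A = \madd(B, x_n)$. It then remains to identify the image $\MM(\newmatrices_n)$ with $\Wasc_n$.

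To that end I would first write down the recursive description of the image forced by the definition of $\MM$ and the three lemmas: a sequence $x = (x_1,\dots,x_n)$ belongs to $\MM(\newmatrices_n)$ if and only if either $n = 1$ and $x = (0)$, or $n \ge 2$, $x' := (x_1,\dots,x_{n-1}) \in \MM(\newmatrices_{n-1})$, and $0 \le x_n \le \dim\bigl(\MM^{-1}(x')\bigr)$. The forward implication uses that $\reduce(A) = \bigl(B, \spone(A)-1\bigr)$ with $B \in \newmatrices_{n-1}$, together with the bound $\spone(A) - 1 \in [0,\dim(B)]$, which one checks in the two cases according to whether the rightmost column of $A$ contains one or more than one~$1$. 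The reverse implication uses Lemma~\ref{lemma2} and the identity $\reduce\bigl(\madd(B,i)\bigr) = (B,i)$.

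The core of the proof is the claim, established by induction on $n$, that for every $A \in \newmatrices_n$ with $\MM(A) = x = (x_1,\dots,x_n)$ one has
$$
\dim(A) = 1 + \wasc(x)\qquad\text{and}\qquad \spone(A) - 1 = x_n .
$$
The second identity is immediate from the definitions of $\reduce$ and $\MM$, but it is what makes the induction go through. The base case $n = 1$ is clear, since $\wasc\bigl((0)\bigr) = 0$. For the inductive step, write $A = \madd(B, i)$ with $i = x_n$, $B \in \newmatrices_{n-1}$, and $\MM(B) = x' = (x_1,\dots,x_{n-1})$, and invoke the inductive hypothesis $\dim(B) = 1 + \wasc(x')$ and $\spone(B) - 1 = x_{n-1}$. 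If $i < \spone(B) - 1$, that is $x_n < x_{n-1}$, then Definition~\ref{defmatadd}(a) applies, the dimension is unchanged, $\dim(A) = \dim(B)$, and no weak ascent is created at position $n-1$, so $\wasc(x) = \wasc(x')$. If $i \ge \spone(B) - 1$, that is $x_{n-1} \le x_n$, then Definition~\ref{defmatadd}(b) applies, the dimension grows by one, $\dim(A) = \dim(B) + 1$, and exactly one new weak ascent appears, so $\wasc(x) = 1 + \wasc(x')$. In either case $\dim(A) = 1 + \wasc(x)$, while $\spone(A) - 1 = i = x_n$ by Lemma~\ref{lemma2}.

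Substituting $\dim\bigl(\MM^{-1}(x')\bigr) = 1 + \wasc(x')$ into the recursive description above turns it into the defining recursion of $\Wasc_n$: namely $x \in \MM(\newmatrices_n)$ if and only if $x = (0)$ when $n = 1$, or $x' \in \MM(\newmatrices_{n-1})$ and $0 \le x_n \le 1 + \wasc(x')$ when $n \ge 2$. A straightforward induction then yields $\MM(\newmatrices_n) = \Wasc_n$, completing the proof. I expect the only genuinely delicate point to be the bookkeeping in the inductive claim $\dim(A) = 1 + \wasc(\MM(A))$ --- in particular verifying that the threshold $i = \spone(B) - 1$ separating cases (a) and (b) of Definition~\ref{defmatadd} coincides exactly with the weak-ascent condition $x_{n-1} \le x_n$. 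This is the analogue of the case analysis in the proof of Theorem~\ref{th:mainbiject}, and is in fact slightly simpler here, since Cases~2 and~3 of that argument collapse into the single case $i \ge \spone(B) - 1$.
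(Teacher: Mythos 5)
Your proposal is correct and takes essentially the same route as the paper's proof: injectivity because the sequence encodes the construction, the recursive description of the image via $\reduce$ and $\madd$, and an induction on $n$ establishing $\spone(A)-1=x_n$ together with the dimension--weak-ascent identity, which converts that description into the defining recursion of $\Wasc_n$. The only discrepancy is normalization: you prove $\dim(A)=1+\wasc(\MM(A))$, which is the correct statement (check $n=1$ or Example~\ref{starone}), whereas the paper's displayed claim reads $\dim(A)=\wasc(x)$, an off-by-one slip that does not affect the structure of the argument.
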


\begin{proof}
  Since the sequence $\MM(A)$ encodes the construction of the matrix
  $A$, the mapping $\MM$ is injective.  We have to prove that the
  image of $\newmatrices_n$ is the set $\Wasc_n$.  By definition,
  $x=(x_1,\ldots,x_n) \in \MM(\newmatrices_n)$ if and only if
  \begin{align}
    x'=(x_1,\ldots,x_{n-1}) \in \MM(\newmatrices_{n-1}) \quad\mbox{and}\quad
    x_n \in [0,\dim(\MM^{-1}(x'))]. \label{condone}
  \end{align}
  We will prove by induction on $n$ that for all
  $A \in \newmatrices_n$, with associated sequence
  $\MM(A) = x = (x_1,\ldots,x_n)$, one has
  \begin{align}
    \dim(A) = \wasc(x)+1 \quad\mbox{and}\quad \spone(A) = x_n+1. \label{condtwo}
  \end{align}
  This will convert the description \eqref{condone} above into the
  definition of weak ascent sequences, thus concluding the proof.

  Let us examine the two statements of \eqref{condtwo} more
  closely. They hold for $n=1$. Assume they hold for $n-1$ with
  $n\geq 2$ and let $A=\madd(B,i)$ for $B \in \newmatrices_{n-1}$.  If
  $\MM(B) = (x_1,\ldots,x_{n-1})$ then
  $\MM(A) = (x_1,\ldots,x_{n-1},i)$.  Lemma~\ref{lemma2} gives us that
  $\spone(A)=i+1$ and it follows that
  $$\dim(A) =
  \begin{cases}
    \dim(B)=\wasc(x')+1 = \wasc(x)+1 & \mbox{ if }i<x_{n-1}\\
    \dim(B)+1 = \wasc(x')+1+1=\wasc(x)+1 & \mbox{ if }i\ge x_{n-1}.
  \end{cases}
  $$
  The result follows from this.
\end{proof}

\begin{example}\label{starone} Let us construct the matrix $A$ that
  corresponds to the weak ascent sequence
  $x=(0,0,2,1,1,0,1,5) \in \Wasc_8$; that is, $\MM(A)=x$.  To begin, we
  have $\MM(\Mat{1})=(0)$. From this,
  $$
  \begin{array}{cccccccccl}
    \Mat{1} & \xrightarrow{x_2=0} & \Mat{1&1 \\ 0&0}
    & \xrightarrow{x_3=2} & \Mat{1&1&0 \\ 0&0&0 \\ 0&0&1}
    & \xrightarrow{x_4=1} & \Mat{1&1&0 \\ 0&0&1 \\ 0&0&1}
    & \xrightarrow{x_5=1} & \Mat{1&1&0&0 \\ 0&0&1&1 \\ 0&0&1&0 \\ 0&0&0&0} &\\[1em]
    &&&&
    & \xrightarrow{x_6=0} & \Mat{1&1&0&1 \\ 0&0&1&1 \\ 0&0&1&0 \\ 0&0&0&0}
    & \xrightarrow{x_7=1} & \Mat{1&1&0&1&0 \\ 0&0&1&1&1 \\ 0&0&1&0&0 \\ 0&0&0&0&0 \\ 0&0&0&0&0 }& \\[1.5em]
    &&&&&&& \xrightarrow{x_8=5} & \Mat{     \vrule width 0pt height 6pt  1&1&0&1&0&0 \\ 0&0&1&1&1&0 \\ 0&0&1&0&0&0 \\ 0&0&0&0&0&0 \\ 0&0&0&0&0&0 \\ 0&0&0&0&0&1      \vrule width 0pt height 6pt }& \!\!=\; A
  \end{array}
  $$
\end{example}

We can offer a more direct description of the bijection in terms
  of the decreasing subsequence decomposition of a weak ascent
  sequence.  A weak ascent sequence can be seen as a sequence of
  decreasing subsequences or runs.
  For instance, $w=00211015$ can be decomposed into
  decreasing runs as $w=0|0|21|10|1|5$.  Construct the matrix $M$ from
  $w$ as follows: column $i$ of $M$ is formed from the $i$th
  decreasing run of $w$ whereby there is a $1$-entry at position $(i,j)$
  if the $i$th decreasing run contains the value $j-1$.

Let us also mention that the inverse mapping to this consists of
  labelling all the $1$-entries of a matrix $M \in \newmatrices_n$ with
  labels 1 to $n$ (in the manner specified in
  Definition~\ref{posdef}).  Then the $i$th element of the weak
  ascent sequence is equal to $j$ if and only if the entry labelled $i$ is in the
  $(j+1)$th row of the matrix $M$.

It is straightforward to see how several simple statistics get translated through the bijection $\MM$.
\begin{proposition}
Let $w=(w_1,\ldots,w_n) \in \Wasc_n$ and suppose that $M \in \newmatrices_n$ is such that $\MM(M)=w$.
Then
\begin{itemize}
\item the number of occurrences of $j$ in $w$ equals
    the sum of the entries of the $(j+1)$th row in $M$,
\item the number of weak ascents in $w$ equals the dimension of $M$
  reduced by $1$,
\item the length of the final decreasing run is the
    sum of the entries in the rightmost column of $M$,
\item $w_n$, the last entry of $w$, is equal to $\spone(M)-1$.
\end{itemize}
\end{proposition}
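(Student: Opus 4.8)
The plan is to establish all four equalities by a single induction on $n$, built on the recursive description of $\MM$ from the previous two proofs: for $n \ge 2$ one has $M = \madd(B, w_n)$, where $B = \MM^{-1}(w_1,\dots,w_{n-1}) \in \newmatrices_{n-1}$ is the matrix attached to the truncated sequence $w' = (w_1,\dots,w_{n-1})$. The base case $n = 1$ (with $w = (0)$ and $M = \Mat{1}$) is checked by hand. Two of the four items then require no new work at all: the equality of the number of weak ascents of $w$ with $\dim(M)$, and the equality $w_n = \spone(M) - 1$, are precisely the two assertions recorded in \eqref{condtwo} in the proof that $\MM$ is a bijection, so I would simply invoke that.

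For the first item I would follow the top row of $M$ through the operation $\madd(B, w_n)$. If $w_n \ge 1$, then by Definition~\ref{defmatadd} the inserted $1$ occupies row $w_n + 1 \ge 2$ and no entry of the top row is touched (in case (b) the adjoined column is zero apart from that single $1$), so the top-row sum is unchanged; since $w_n \ne 0$ the sequences $w$ and $w'$ have the same number of zeros, and the inductive hypothesis closes this case. If $w_n = 0$, then in case (a) the defining inequality $0 < \spone(B) - 1$ forces $\spone(B) \ge 2$, so the top entry of the rightmost column of $B$ is currently $0$ and is turned into a $1$; in case (b) the new rightmost column receives its single $1$ in the top row. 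In both cases the top-row sum increases by exactly $1$, which agrees with the fact that $w$ has one more zero than $w'$.

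For the third item I would likewise track the rightmost column of $M$. Because \eqref{condtwo} gives $\spone(B) = w_{n-1} + 1$, the dichotomy in Definition~\ref{defmatadd} is exactly ``$w_n < w_{n-1}$'' (case (a)) versus ``$w_n \ge w_{n-1}$'' (case (b)). In case (a) a single new $1$ is inserted into the rightmost column of $B$, above its current topmost $1$, so the rightmost-column sum of $M$ exceeds that of $B$ by one; in case (b) $M$ gains a fresh rightmost column containing just one $1$, so that sum equals $1$. Hence the rightmost-column sum obeys the recursion ``add $1$ when $w_n < w_{n-1}$, otherwise reset to $1$''. What remains is to verify that the number of right-to-left maxima of a weak ascent sequence satisfies the same recursion, with the same base value; pinning down how the right-to-left maxima of $w$ are determined by those of $w'$ — particularly in the case $w_n \ge w_{n-1}$ — is the one genuinely delicate step, and the place where I expect the main obstacle to lie. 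The other three statistics amount to bookkeeping once the recursive picture of $\MM$ is in hand.
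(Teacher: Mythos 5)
Your overall framework---induction on $n$ via $M=\madd(B,w_n)$ with $B=\MM^{-1}(w_1,\dots,w_{n-1})$, reading off the second and fourth items from \eqref{condtwo}---is the natural route (the paper itself gives no proof, merely asserting the proposition is straightforward), and your treatment of the zeros/top-row item is complete and correct, including both subcases when $w_n=0$. The problem is exactly the step you flagged and left open: you never verify that the number of right-to-left maxima obeys the recursion you derived for the rightmost-column sum (``add $1$ when $w_n<w_{n-1}$, reset to $1$ when $w_n\ge w_{n-1}$''), and in fact it does not. When $w_n\ge w_{n-1}$, the right-to-left maxima of $w$ consist of $w_n$ together with all right-to-left maxima of $w'$ exceeding $w_n$, and that second set need not be empty. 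Concretely, for $w=(0,1,0,0)$ one computes $M=\Mat{1&1&1\\ 0&1&0\\ 0&0&0}$, whose rightmost column sums to $1$, while $w$ has two right-to-left maxima under the strict convention (the entry $1$ and the final $0$) and three under the weak one. So the remaining step cannot be finished as ``bookkeeping''; with the standard meaning of right-to-left maxima the claimed equality itself fails on this example.

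What your matrix-side recursion actually computes is the length of the longest strictly decreasing suffix of $w$: after the last step at which case (b) of Definition~\ref{defmatadd} is used---that is, after the last index $m$ with $w_m\ge w_{m-1}$---every subsequent insertion uses case (a) and contributes one $1$ to the final column, so the rightmost-column sum equals $n-m+1$, the number of terms in the terminal run $w_m>w_{m+1}>\cdots>w_n$. These are the right-to-left maxima of that final run only, not of the whole sequence. Your induction closes immediately once the sequence statistic in the third bullet is replaced by this one (its recursion is exactly ``increment when $w_n<w_{n-1}$, reset to $1$ otherwise''); as stated, however, either the bullet needs this reinterpretation or it is simply incorrect, and your proof as written has a genuine, unfixable-as-stated gap at precisely that point.
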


Next, we will characterize those matrices that correspond, via our bijection $\MM$, to ascent sequences.

Given a word $w=w_1\dots w_n$ let an entry $w_i$ with
  $w_{i-1}=w_i$ be called a \emph{plateau}.
Each weak ascent ``creates'' a new column.  An ascent
  corresponds to a bottommost $1$-entry in a new column which is
  strictly south-east of the topmost $1$ in the previous column.  On
  the other hand a plateau corresponds to a bottommost $1$-entry in a
  new column which is in the same row as the topmost $1$ in the
  previous column.

We introduce the
  terminology \emph{weak} and \emph{strong} entries for these two
  special kind of $1$s in the matrix.
  \begin{definition}Let a $1$-entry in a $0/1$-matrix be called
    \emph{weak} entry if (a) there are only $0$s below the $1$-entry
    in its column, and (b) there is a $1$ to the left of it in the
    same row such that there are only $0$s above this $1$. On the
    other hand, let a $1$-entry in a $0/1$-matrix be called
    \emph{strong} entry if (a) there are only $0$s below the $1$-entry
    in its column, and (b) the top most $1$ to the column to the left
    is in a smaller indexed row.
\end{definition}
  \begin{example}
    In matrix $C$ the entry at position
    $(2,4)$ is the only weak entry, $C_{2,2}$, $C_{3,3}$, $C_{2,4}$
    and $C_{5,5}$ are strong $1$-entries. In the matrix $D$ the
    entries at positions $(1,2)$ and $(2,5)$ are the weak entries
    and $D_{2,3}$, $D_{3,4}$ and $D_{3,6}$ are the strong $1$-entries.
    $$
	C= \Mat{     \vrule width 0pt height 6pt
		1&1&{\bf{0}}&1&0&0 \\
		0&1&{\bf{1}}&{\bf{1}}& 1&0 \\
		0&0&1&{\bf{0}}&0&0 \\
		0&0&0&{\bf{0}}&0&0 \\
		0&0&0&{\bf{0}}&0&0 \\
		0&0&0&{\bf{0}}&0&1      \vrule width 0pt height 6pt }
	\quad
	\quad D  = \Mat{     \vrule width 0pt height 6pt
		{\bf{1}}&{\bf{1}}&0&{\bf{0}}&1&1 \\
		0&{\bf{0}}&1&{\bf{1}}&{\bf{1}}&0 \\
		0&{\bf{0}}&0&1&{\bf{0}}&1 \\
		0&{\bf{0}}&0&0&{\bf{0}}&0 \\
		0&{\bf{0}}&0&0&{\bf{0}}&0 \\
		0&{\bf{0}}&0&0&{\bf{0}}&0     \vrule width 0pt height 6pt}
    $$
  \end{example}

The main observation is that in an ascent sequence the
  appearance of a plateau $w_i$ restricts the possible values of $w_j$
  for $j\geq i+1$, though it does not influence the values in a weak
  ascent sequence.  Hence, if $w$ is an ascent
  sequence then in the corresponding matrix a $1$ can only appear in
  row $i$ in the $j$th column if there are in the previous $j-1$
  columns at least $i-2$ strong entries (column 1 cannot by definition contain a strong entry).

Thus, we have the following corollary of our bijection.
  \begin{corollary} A matrix $A\in\newmatrices$ corresponds to an
    ascent sequence via the bijection $\MM$ if for every $1$-entry
    $A_{i,j}$ there are at least $i-2$ strong entries in columns
    $1,2,\ldots, j-1$.
  \end{corollary}

\mynewpage
\section{A class of factorial posets}
\label{posetssection}

In this section we will define a mapping from the set of matrices
$\newmatrices$ to a set of labeled posets and prove that this mapping
is a bijection.  First let us recall the definition of a factorial
poset from ~\cite{factorialposets}.  To begin, a poset $P$ on the
elements $\{1,\ldots,n\}$ is \emph{naturally labeled} if
$i <_P j$ implies $i<j$.  The poset whose Hasse diagram is depicted in
Figure~\ref{figpos} is a naturally labeled poset.

\begin{definition}[\cite{factorialposets}]
  A naturally labeled poset $P$ on $[1,n]$ such that,
  for all $i,j,k \in [1,n]$, we have
  $$i<j \mbox{ and } j <_P k \;\implies\; i <_P k
  $$
  is called a \emph{factorial poset}.
\end{definition}
Factorial posets are (2+2)-free. This fact, and further properties of
factorial posets can be found in \cite{factorialposets}.

\begin{definition}[The mapping $\Psi$]\label{posdef}
  Let $A \in \newmatrices_n$. Form a matrix $B$ as follows. Make a
  copy of $A$. Beginning with the leftmost column, and within each
  column one goes from bottom to top, replace every 1 that appears
  with the elements $1,2,\ldots,n$. Further, define a partial order
  $(P,<)$ on $[1,n]$ as follows: $i <_P j$ if the index of the column
  that contains $i$ is strictly less than the index of the row that
  contains $j$.  Let
  $$P=\PosetMap(A)
  $$ be the resulting poset.
\end{definition}

Diagrammatically the relation in Definition~\ref{posdef} is equivalent
to $i$ being north-west of $j$ in the matrix and the ``lower hook'' of
$i$ and $j$ being strictly beneath the diagonal:\smallskip

\myonematrix

Note that the set of entries contained in the first $s$ columns for an $s$ is
the complete set $\{1,2,\ldots, {s_{k}}\}$ for some $s_k$.

\begin{example}
  Consider the matrix $A$ from Example~\ref{starone}. Form matrix $B$
  by relabeling the 1s in the matrix according to the rule.
  $$
  \begin{array}{ccc}
    A =
    \begin{bmatrix}
      1&1&0&1&0&0 \\
      0&0&1&1&1&0 \\
      0&0&1&0&0&0 \\
      0&0&0&0&0&0 \\
      0&0&0&0&0&0 \\
      0&0&0&0&0&1
    \end{bmatrix}
       &\mapsto &
    B =
    \begin{bmatrix}
      1&2&0&6&0&0 \\
      0&0&4&5&7&0 \\
      0&0&3&0&0&0 \\
      0&0&0&0&0&0 \\
      0&0&0&0&0&0 \\
      0&0&0&0&0&8
    \end{bmatrix}
  \end{array}
  $$
  This gives the poset $(P,<)= \PosetMap(A)$ with the
  following relations:
  \begin{itemize}
  \item $1<_P 3,4,5,7,8$
  \item $2 <_P 3,8$
  \item $3,4,5,6,7 <_P 8$
  \end{itemize}
  The Hasse diagram of this poset is illustrated in
  Figure~\ref{figpos}.
  \begin{figure}
  $$
  \begin{tikzpicture}[xscale=0.7, yscale=0.9]
    \tikzstyle{every node} = [font=\footnotesize];
    \tikzstyle{disc} = [
      circle,fill=black,draw=black,
      minimum size=4pt, inner sep=0pt];
    \path
    node [disc] (1) at (1, 1) {}
    node [disc] (2) at (3, 1) {}
    node [disc] (6) at (7, 1) {}
    node [disc] (3) at (0, 3) {}
    node [disc] (4) at (2, 3) {}
    node [disc] (5) at (4, 3) {}
    node [disc] (7) at (6, 3) {}
    node [disc] (8) at (6.5, 5) {};
    \draw 
    (1) node[left=2pt] {$1$} -- (3) node[left=2pt]  {$3$}
    (1) -- (4) node[left=2pt]  {$4$}
    (1) -- (5) node[above left]  {$5$}
    (1) -- (7) node[above left]  {$7$}
    (6) node[right=2pt] {$6$} -- (8) node[right=2pt]  {$8$}
    (2) node[right=2pt] {$2$} -- (3) {}
    (3) -- (8)
    (4) -- (8)
    (5) -- (8)
    (7) -- (8)
    ;
  \end{tikzpicture}
  $$
  \caption{A weakly $(3+1)$-free factorial poset.\label{figpos}}
  \end{figure}
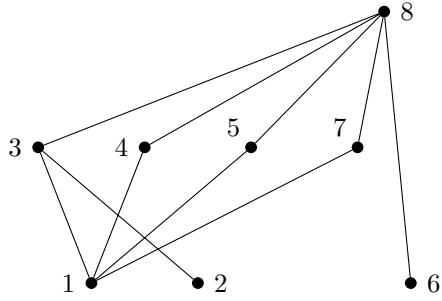
\end{example}

The mapping $\PosetMap$ is a mapping from $\newmatrices_n$ to a set of
labeled (2+2)-free posets on the set $[1,n]$, which we will now
define.

Let $P$ be a factorial poset on $[1,n]$.  We say that $P$ contains a
\emph{special 3+1} if there exist four distinct elements $i<j<j+1<k$
such that the poset $P$ restricted to $\{i,j,j+1,k\}$ induces the 3+1
poset with $i<_P j <_P k$:
$$
\begin{tikzpicture}[scale=0.7]
  \tikzstyle{disc} = [
  circle,fill=black,draw=black,
  minimum size=4pt, inner sep=0pt];
  \path
  node [disc] (i) at (0, 1) {}
  node [disc] (j) at (0, 2) {}
  node [disc] (k) at (0, 3) {}
  node [disc] (j+1) at (1, 1) {};
  \draw
  (i) node[left=2pt] {$i$} -- (j) node[left=2pt]  {$j$}
  (j) -- (k) node[left=2pt] {$k$}
  (j+1) node[right=2pt] {$j+1$};
\end{tikzpicture}
$$
If $P$ does not contain a special 3+1 we
say that $P$ is \emph{weakly $(3+1)$-free}.
Let $\WPoset_n$ be the set of weakly $(3+1)$-free factorial posets
on $[1,n]$.

\begin{theorem}
  Let $\PosetMap$ be as in Definition~\ref{posdef}. If
  $A \in \newmatrices_n$, then the poset $P=\PosetMap(A)$ is factorial
  and weakly $(3+1)$-free. That is $P\in\WPoset_n$ so that
  $$\PosetMap:\newmatrices_n \to \WPoset_n.
  $$
\end{theorem}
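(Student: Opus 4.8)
The plan is to prove both claimed properties — factoriality and weak $(3+1)$-freeness of $P = \PosetMap(A)$ — directly from the combinatorial description of the order relation in Definition~\ref{posdef}, making essential use of the matrix conditions in Definition~\ref{matrixdefn}. Recall the key dictionary: each element $m \in [1,n]$ sits in a unique cell of the relabeled matrix $B$; write $\mathrm{col}(m)$ and $\mathrm{row}(m)$ for its column and row indices. The defining relation is $i <_P j \iff \mathrm{col}(i) < \mathrm{row}(j)$. Two preliminary observations will do most of the work. First, the ``column-filling from bottom to top, left to right'' relabeling means that $m < m'$ implies either $\mathrm{col}(m) < \mathrm{col}(m')$, or ($\mathrm{col}(m) = \mathrm{col}(m')$ and $\mathrm{row}(m) > \mathrm{row}(m')$). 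Second, since $B$ (like $A$) is upper triangular, every element $m$ satisfies $\mathrm{row}(m) \le \mathrm{col}(m)$; hence $m <_P m'$ forces $\mathrm{col}(m) < \mathrm{row}(m') \le \mathrm{col}(m')$, which combined with the first observation gives $m < m'$, so $P$ is naturally labeled.

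\emph{Factoriality.} I would show: if $i < j$ and $j <_P k$, then $i <_P k$. From $j <_P k$ we get $\mathrm{col}(j) < \mathrm{row}(k)$. From $i < j$ and the first observation, $\mathrm{col}(i) \le \mathrm{col}(j)$. Hence $\mathrm{col}(i) \le \mathrm{col}(j) < \mathrm{row}(k)$, i.e. $\mathrm{col}(i) < \mathrm{row}(k)$, which is exactly $i <_P k$. This step is essentially immediate once the dictionary is in place.

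\emph{Weak $(3+1)$-freeness.} This is the substantive part. Suppose for contradiction that $P$ contains a special $3+1$: distinct elements $i < j < j+1 < k$ with $i <_P j <_P k$ and $j+1$ incomparable to all of $i,j,k$. I want to derive a contradiction with Definition~\ref{matrixdefn}(c) (the column-adjacency rule). The idea: $i <_P j$ means $\mathrm{col}(i) < \mathrm{row}(j)$, and because $j$ and $j+1$ are \emph{consecutive integers} in the relabeling order, they lie either in the same column (with $j+1$ immediately above $j$) or in adjacent columns (with $j$ the topmost $1$ of its column $c$ and $j+1$ the bottommost $1$ of column $c+1$). In the same-column case, $\mathrm{col}(j) = \mathrm{col}(j+1)$ and $\mathrm{row}(j+1) < \mathrm{row}(j)$; one then checks that $i <_P j$ together with $j <_P k$ propagates to force $i <_P j+1$ or $j+1 <_P k$ (using that $\mathrm{col}(i) < \mathrm{row}(j)$ and factoriality/transitivity), contradicting incomparability. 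In the adjacent-column case, $j$ is topmost in column $c := \mathrm{col}(j)$ and $j+1$ is bottommost in column $c+1$, so Definition~\ref{matrixdefn}(c) gives $\mathrm{row}(j+1) \le \mathrm{row}(j) = \text{(row of topmost } 1 \text{ in col } c)$; combined with $\mathrm{col}(i) < \mathrm{row}(j)$ one gets $\mathrm{col}(i) < \mathrm{row}(j+1)$... here one must be careful because $\mathrm{row}(j) = \mathrm{row}(j+1)$ need not hold, only $\le$, so I will instead argue via $j+1 <_P k$: from $j <_P k$, $\mathrm{col}(j) < \mathrm{row}(k)$, and since $\mathrm{col}(j+1) = \mathrm{col}(j) + 1 \le \mathrm{row}(k)$ would still need $<$, I examine whether $\mathrm{col}(j+1) < \mathrm{row}(k)$; the only escape is $\mathrm{row}(k) = \mathrm{col}(j)+1 = \mathrm{col}(j+1)$, meaning $k$ lies on the diagonal of column $\mathrm{col}(j+1)$, so $k$ is the unique $1$ in its column at the diagonal — and then I use $i <_P j$, $\mathrm{col}(i) < \mathrm{row}(j) \le \mathrm{col}(j) < \mathrm{row}(k) = \mathrm{col}(j+1)$, wait, this gives $\mathrm{col}(i) < \mathrm{col}(j+1)$, and I need $\mathrm{col}(i) < \mathrm{row}(j+1)$; here Definition~\ref{matrixdefn}(c) applied to columns $c, c+1$ yields $\mathrm{row}(j+1) \ge \mathrm{row}(\text{bottommost 1 of col } c) $... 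Let me state the plan honestly: I will carefully enumerate the two placement cases for $\{j, j+1\}$ and in each show that the hypotheses $i <_P j$ and $j <_P k$, read off as inequalities among row/column indices and combined with condition~(c), force at least one of $i <_P j+1$, $j+1 <_P j$, $j <_P j+1$, or $j+1 <_P k$ to hold, contradicting the supposed incomparability of $j+1$.

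\emph{Main obstacle.} The hard part is precisely the adjacent-column subcase of weak $(3+1)$-freeness: condition~(c) only gives a \emph{weak} inequality (topmost $1$ of the left column weakly above bottommost $1$ of the right column), and translating ``$j$ and $j+1$ are consecutive labels across a column boundary'' into the exact placement constraints, then checking that no incomparability can survive, requires a genuinely careful case analysis rather than a one-line inequality chase. I expect to need the auxiliary remark stated in the excerpt — that the entries in the first $s$ columns form an initial segment $\{1,\ldots,s_k\}$ — to pin down which element is topmost/bottommost and hence to apply~(c) to the correct pair of adjacent columns; this is the geometric heart of why the column-adjacency rule is exactly the right condition to forbid the special $3+1$.
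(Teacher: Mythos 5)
Your dictionary, the natural-labeling observation, and the factoriality half are correct and amount to the same content as the paper's argument (the paper phrases factoriality via the strict downsets all being initial segments $[1,k]$; your chain $\mathrm{col}(i)\le \mathrm{col}(j)<\mathrm{row}(k)$ is an equivalent, slightly more direct way to say it). The genuine gap is exactly where you flag it: the adjacent-column subcase of weak $(3+1)$-freeness, and the reason you could not close it is that you read Definition~\ref{matrixdefn}(c) with the inequality reversed. With $j$ the topmost $1$ of its column and $j+1$ the bottommost $1$ of the next column (which is the correct description of this case, using property (b) -- no empty columns -- and the column-by-column, bottom-to-top relabeling), ``the topmost $1$ in the left column is weakly above the bottommost $1$ in the right column'' means $\mathrm{row}(j)\le \mathrm{row}(j+1)$, not $\mathrm{row}(j+1)\le \mathrm{row}(j)$ as you wrote. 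With the correct direction the case is a one-line chase, and it is precisely the paper's argument: $i<_P j$ gives $\mathrm{col}(i)<\mathrm{row}(j)\le \mathrm{row}(j+1)$, hence $i<_P j+1$, contradicting the incomparability of $i$ and $j+1$ in the special $3+1$. Your reversed inequality is why the chain ``$\mathrm{col}(i)<\mathrm{row}(j+1)$'' would not follow, and your fallback route via $j+1<_P k$ genuinely cannot work in this case, since $\mathrm{col}(j+1)=\mathrm{col}(j)+1$ may equal $\mathrm{row}(k)$, so strictness can fail; the detour about $k$ sitting on the diagonal does not rescue it.

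Two smaller points. The same-column case, which you only gesture at, should be written out, but it is immediate and does not need condition (c): there $\mathrm{col}(j+1)=\mathrm{col}(j)<\mathrm{row}(k)$, so $j+1<_P k$, contradicting incomparability with $k$ (this is the paper's case (a)). And the claim that $j+1$ sits either strictly above $j$ in the same column or as the bottommost $1$ of the very next column deserves one explicit sentence (it uses that every column contains a $1$), which both you and the paper essentially assume. As it stands, your proposal identifies the right case split and the right contradictions to aim for, but the harder case is left unproved because of the misread direction of (c), so the argument is incomplete.
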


\begin{proof}
  Let $A \in \newmatrices_n$ and $P=\PosetMap(A)$.  Given
  $i \in [1,n]$, we define the strict downset
  $D(i) = \{ j \in [1,n]: j <_P i\}$ of $i$.  A defining
  characteristic of a (2+2)-free poset is that the collection
  $\{D(i):i\in[1,n]\}$ of strict downsets can be linearly ordered by
  inclusion. Similarly, a defining characteristic of a factorial poset
  is that each strict downset is of the form $[1,k]$ for some $k<n$.
  From Definition~\ref{posdef}, it is clear why this must be the case
  for $P$: In constructing $P$ from the matrix $A$, the intermediate
  matrix $B$ contains all entries in $[1,n]$ exactly once.  If $j$
  appears in row $t$ of $B$, then the strict downset $D(j)$ will
  consist of all $i$'s that appear in columns 1 through to $t-1$
  (inclusive).  All elements that appear in row $t$ of $B$ have the
  same strict downset.  Furthermore, since the entries $1$, $2$,
  \dots, $n$ in $B$ are such that their indices appear from left to
  right in increasing order, the strict downset of every element must
  be $[1,k]$ for some $k<n$. Thus $P$ is factorial.

  Let us now suppose that $P$ contains an induced subposet on the four
  elements $i<j<j+1<k$ that forms a special 3+1.  In particular,
  $i <_P j <_P k$.  Consider the matrix entries in $B$ that correspond
  to $i$, $j$, $j+1$ and $k$. Suppose that $i$ is at position
  $(i_1,i_2)$ in $B$, and that $j$ and $k$ are at positions
  $(j_1,j_2)$ and $(k_1,k_2)$, respectively.  The hooks formed from
  $(i,j)$ and $(j,k)$ must be beneath the diagonal, so we must have
  $i_1\leq i_2< j_1 \leq j_2 < k_1 \leq k_2$.  Consider now
  $\ell = j+1$ that is at position $(\ell_1,\ell_2)$ in $B$.  This
  element must appear either (a) in the same column as $j$ and
  strictly above it, or (b) in the next column and in a row weakly
  below $j$.  For case (a) this means $\ell_1 \leq \ell_2 = j_2$, from
  which we find that $\ell <_P k$, but this cannot happen since $\ell$
  and $k$ are incomparable.  For case (b) this means
  $j_2 \leq \ell_1 \leq \ell_2$, from which we find that $i<_P\ell$,
  but this cannot happen since $\ell$ and $i$ are incomparable.
  Therefore $P$ cannot contain a special 3+1. In other words, $P$ is
  weakly (3+1)-free and $\PosetMap:\newmatrices_n \to \WPoset_n$.
\end{proof}

We next define a function $\ptm$ that maps posets in $\WPoset_n$ to
matrices. We shall show that $\ptm$ is the inverse of $\PosetMap$.

\begin{definition}\label{bfromc}
  Given $P \in \WPoset_n$, suppose there are $k$ different strict
  downsets of the elements of $P$, and that these are
  $D_0 = \emptyset$, $D_1$, $\ldots$, $D_{k-1}$.  By convention we
  also let $D_k = [1,n]$.  Suppose that $L_i$ is the set of elements
  $p \in P$ such that $D(p) =
  D_i$; these are called \emph{level sets}.
  Let $C$ be the matrix with
  $C_{i,j} = L_{i-1} \cap (D_{j}\backslash D_{j-1})$ for all
  $i,j \in [1,k]$, see \cite{compositionmatrices} for details.
  Start with $B$ as a copy of $C$ and then repeat the following steps
  until there is no $i$ that satisfies the condition of A1:
  \begin{enumerate}
  \item[A1.] Choose the first column, $i$ say, of $B$ that contains either a set of size $>1$ or two elements such that the element in the higher row is less is value to the element in the lower row.
  \item[A2.] With respect to the usual order on $\N$, let $\ell$ be
    the smallest entry in column $i$.
  \item[A3.] Introduce a new empty column between columns $i-1$ and
    $i$ so that the old column $i$ is now column $i+1$.
  \item[A4.] Move $\ell$ one column to the left (to the current column
    $i$) and set $j=1$.
  \item[A5.] If $\ell+j$ is strictly above $\ell+j-1$, then move it one
    column to the left, increase $j$ by 1, and repeat A5. Otherwise,
    go to A6. (The outcome of step A5 will be that the non-empty
    singleton sets in column $i$, from bottom to top, are
    $\ell, \ell+1,\ldots, \ell+t$ for some $t \geq 0$.)
  \item[A6.] Introduce a new row of empty sets between rows $i$ and
    $i+1$ of $B$. Matrix $B$ will have increased in dimension by 1.
  \end{enumerate}

  Finally, let $\ptm(P)$ be the result of replacing the singletons in
  $B$ with ones and the empty sets in $B$ with zeros.
\end{definition}

\begin{example}
  Let $P$ be the poset in Figure~\ref{figpos}.  The strict downsets of
  $P$ are
  $$D_0=\emptyset, D_1=\{1\}, D_2=\{1,2\}\text{ and }
  D_3 = \{1,2,3,4,5,6,7\}.
  $$ The level sets of $P$ are
  $$L_0=\{1,2,6\}, L_1=\{4,5,7\}, L_2=\{3\}\text{ and }
  L_3=\{8\}.
  $$ This gives the matrix
  $$C=
    \begin{bmatrix}
      \{1\} & \{2\} & \{6\} & \emptyset \\
      & \emptyset & \{4,5,7\} & \emptyset \\
      & & \{3\} & \emptyset \\
      & & & \{8\}
    \end{bmatrix}.
  $$
  Before continuing we would like to point out that the matrix
  obtained from $C$ by replacing each set $C_{i,j}$ by its cardinality
  $|C_{i,j}|$ corresponds, via \cite[\S 3.1]{compositionmatrices},
  to the unlabeled version of the poset
  which is (2+2)-free.  Continuing, by applying A1 we find that $i=3$
  is the first column satisfying A1.  The smallest
  number in this column is $3$. Furthermore, $4$ is above $3$ so we move
  $4$ to the 3rd column, but $5$ is in the same row as $4$. So the
  largest contiguous sequence starting from $3$ as we go up from it in
  that column (and skip rows if we wish) is $\{3,4\}$. We next go to step
  A6 and insert a new row with empty sets below the row of $\{3\}$. Hence,
  the outcome of A6 will be
  $$B=
  \begin{bmatrix}
    \{1\} & \{2\} & \emptyset & \{6\} & \emptyset \\
    & \emptyset & \{4\} & \{5,7\} & \emptyset \\
    & & \{3\} & \emptyset &\emptyset \\
    & &  & \emptyset &\emptyset \\
    & & & & \{8\}
  \end{bmatrix}.
  $$
  As column $i=4$ satisfies the statement of A1, we go through steps A2-A6 and find the outcome of A6 to be
  $$B=
  \begin{bmatrix}
    \{1\} & \{2\} & \emptyset & \{6\} & \emptyset & \emptyset \\
    & \emptyset & \{4\} & \{5\} & \{7\} & \emptyset \\
    & & \{3\} & \emptyset & \emptyset &\emptyset \\
    & &  & \emptyset &\emptyset & \emptyset  \\
    & &  & &\emptyset & \emptyset  \\
    & & & &  & \{8\}
  \end{bmatrix}.
  $$
  Since there are now no columns that satisfy A1, we
  replace singletons with 1s and emptysets with 0s to find
  $$\ptm(P) =
  \begin{bmatrix}
    1&1&0&1&0&0 \\
    0&0&1&1&1&0 \\
    0&0&1&0&0&0 \\
    0&0&0&0&0&0 \\
    0&0&0&0&0&0 \\
    0&0&0&0&0&1
  \end{bmatrix}.
  $$
\end{example}

\begin{theorem} \label{welldefinedptm}
  Let $\ptm$ be the mapping from Definition~\ref{bfromc}. If $P$ is a
  poset in $\WPoset_n$, then the matrix $\ptm(P)$ is in
  $\newmatrices_n$ so that
  $$\ptm: \WPoset_n \to \newmatrices_n.
  $$
\end{theorem}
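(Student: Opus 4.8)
The plan is to follow the auxiliary matrix $B$ through the loop A1--A6 and to maintain, by induction on the number of completed iterations, four invariants: (I1) $B$ is upper triangular; (I2) the nonempty entries of $B$ partition $[1,n]$; (I3) every column of $B$ is a nonempty block of consecutive integers and these blocks increase from left to right; and (I4) for each pair of adjacent columns the topmost nonempty cell of the left column is weakly above the bottommost nonempty cell of the right column --- this is Definition~\ref{matrixdefn}(c) read at the level of sets. Granting the invariants, the theorem follows quickly. The loop terminates because each iteration creates a brand-new column that no later iteration modifies (a later iteration chooses the leftmost column carrying a set of size $>1$, a position that only moves rightward as iterations proceed, and it only ever pulls elements into a freshly created column, never enlarging any set), and each such new column holds at least one element of $[1,n]$; since these new columns accumulate disjoint nonempty subsets of $[1,n]$, there are at most $n$ iterations. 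At termination every nonempty cell is a singleton, so by (I2) there are exactly $n$ of them, and replacing singletons by $1$ and empty sets by $0$ gives a matrix satisfying (a) by (I2), (b) by (I3), and (c) by (I4); thus $\ptm(P)\in\newmatrices_n$.

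For the base case, $B=C$. That $C$ is upper triangular, has nonempty cells partitioning $[1,n]$, and has no empty row or column is the defining property of the composition matrix of a $(2+2)$-free poset (see~\cite{compositionmatrices}); invariant (I3) then uses that $P$ is factorial, so that each strict downset $D_j$ is an interval $[1,m_j]$ and hence column $j$ of $C$ is the integer interval $[m_{j-1}+1,m_j]$. Invariant (I4) for $C$ is exactly where the weakly $(3+1)$-free hypothesis enters: I would argue contrapositively that if the topmost element of column $j$ of $C$ lay strictly below the bottommost element of column $j+1$, then, reading off the corresponding entries in the relabelled matrix and using the hook-below-the-diagonal description of $<_P$ from Definition~\ref{posdef}, one can exhibit four elements $i<j'<j'+1<k$ inducing a special $3+1$ in $P$, contradicting $P\in\WPoset_n$.

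For the inductive step one checks that a single pass of A1--A6 preserves (I1)--(I4). Invariant (I2) is immediate since a pass only relocates elements. For the rest the driving structural facts are that the element $\ell$ chosen in A2 is the minimum of its column and sits in that column's bottommost nonempty cell, and that the run $\ell,\ell+1,\dots,\ell+t$ extracted in A4--A5 occupies strictly decreasing rows, so the new column receives only singletons. Because the chosen column contains a cell of size $>1$, this ascending run cannot exhaust the column; hence no column becomes empty and the remaining part of the chosen column is still a nonempty interval of integers sitting immediately to the right of the new column, which yields (I3) (all other columns are untouched). For (I1) one verifies that the run and the leftover land in rows not exceeding their new column indices and that the row inserted in A6 is placed so as not to push any entry below the diagonal.

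Invariant (I4) is the delicate part, and the step I expect to be the main obstacle. For the new pair (new column, leftover) it follows from the stopping rule in A5: the run halted at $\ell+t$ precisely because $\ell+t+1$ is not strictly above $\ell+t$, which forces the bottommost cell of the leftover to be weakly below the topmost cell of the run. For the new pair (leftover, former next column) one must combine (I4) for the old column $i$ with the way the ascending run meets that next column; this is where, as in the base case, weakly $(3+1)$-freeness is used --- without it the composition matrix $C$ can already fail Definition~\ref{matrixdefn}(c), and the splitting procedure does not repair this. I would phrase both invariant (I4) and the special-$3+1$ obstruction uniformly in terms of the relabelled matrix and Definition~\ref{posdef}, so that a single geometric picture handles the base case and the inductive step together.
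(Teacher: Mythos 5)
Your overall architecture (induction over the passes of A1--A6 with invariants (I1)--(I4)) is workable, but the inductive step as you describe it has a genuine gap, and it leans on a structural claim that is false. The claim is that the element $\ell$ chosen in A2 ``sits in that column's bottommost nonempty cell''. Counterexample: let $P$ be the factorial poset on $[1,6]$ whose only strict relations are $1<_P 3$ and $1,2<_P 6$; it has no three-element chain, so $P\in\WPoset_6$. Its matrix $C$ is $3\times 3$ with first column $\{1\}$, second column $\{2\}$, and third column having cells $\{4,5\}$ in row $1$, $\{3\}$ in row $2$, $\{6\}$ in row $3$. The first column with a cell of size $>1$ is column $3$, and $\ell=3$ lies in row $2$, strictly above the bottommost nonempty cell $\{6\}$. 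Consequently, after A4--A5 the new column (here $\{4\}$ in row $1$, $\{3\}$ in row $2$) has its bottommost nonempty cell strictly higher than the bottommost cell of the column that was split, so invariant (I4) for the adjacent pair (column $i-1$, new column $i$) is \emph{not} a consequence of (I4) for (column $i-1$, old column $i$). That is precisely the pair your inductive step never examines: you treat (new column, leftover) and (leftover, former next column), but not (previous column, new column). In the example it happens to hold, but your stated invariants do not prove it, so the induction does not close as written.

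The repair is to strengthen (I4) so that it tracks labels rather than just cells: for adjacent columns, the cell containing the \emph{largest} label of the left column is weakly above the cell containing the \emph{smallest} label of the right column. This implies Definition~\ref{matrixdefn}(c), and it is where weak $(3+1)$-freeness enters --- and it enters only once, at the column boundaries of $C$: if the largest label $m$ of a column of $C$ sat strictly below $m+1$, then the element $k$ whose strict downset equals $[1,m]$ (such $k$ exists because $[1,m]$ is one of the $D_i$) cannot be $m+1$, hence $k\ge m+2$, and together with $m$, $m+1$ and any element of $D(m)\setminus D(m+1)$ it forms a special $3+1$. The strengthened invariant is then preserved verbatim by every pass: for the pair (run, leftover) it is exactly the A5 stopping rule, while the other two pairs (including the one you omitted) involve the same extreme labels as before the pass, since the new column inherits the smallest label $\ell$ of the split column and the leftover inherits its largest label. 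In particular there is no need to re-invoke weak $(3+1)$-freeness at intermediate stages, which would in any case be delicate because the intermediate matrices are no longer the composition matrix of $P$. This label-based formulation is essentially what the paper's own proof tracks (its Property 1$''$ about the consecutive labels $j$, $j+1$ at a column boundary), so with this change your plan becomes a correct, and somewhat more systematic, version of the published argument.
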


\begin{proof}
  Let $P \in\WPoset_n$.  Since $P$ is a factorial poset, we know that
  the strict downsets of elements all have the same contiguous form
  $[1,\ell]$ for some $\ell\in [0,n-1]$.  Let us suppose that there
  are $k$ different strict downsets of the elements of $P$, and that
  these are $D_0 = \emptyset$, $D_1$, $\ldots$, $D_{k-1}$.  Let us
  further suppose that $L_i$ is the set of elements $p \in P$ such
  that $D(p) = D_i$, the elements at {\it{level $i$}} of the poset.
  Next let $C$ be the matrix with
  $$C_{i,j} = L_{i-1} \cap (D_{j}\backslash D_{j-1}) \text{ for all
  }i,j \in [1,k].$$

  The matrix $C$ is upper triangular and is in fact a partition
  matrix.  A partition matrix is an upper triangular matrix whose
  entries form a set partition of an underlying set with the
  additional property: for all $1\leq a<b\leq n$, the column
  containing $b$ cannot be left of that containing $a$. See
  \cite{partition_and_composition_matrices} for further details on
  partition matrices.

  Moreover, since $P$ is weakly (3+1)-free, the structure of the
  matrix $C$ is further restricted in the following sense:
  \begin{description}
  \item[Property 1]
    The matrix $C$ does not contain four entries $i$,
    $j$, $j+1$ and $k$ such that the hooks for the pairs $(i,j)$ and
    $(j,k)$ are both below the main diagonal, whereas the hooks, if defined
    between $j+1$ and each of $i,j,k$ are on or above the main
    diagonal.
  \end{description}
  (Note that the hook for a pair $(i,j)$ is only defined when the
    entries are in strict north-west relative position.  When the hook
    for two entries is not defined, then the corresponding entries in
    the poset are incomparable.)

  Next, let us consider $B$ that is constructed from $C$ in
  Definition~\ref{bfromc}.  When a column of $B$ is split into two (as
  per A3), a new empty row is added in step A6 which preserves the
  upper-triangular property.  Also, there can be no empty columns in
  $B$ since the dissection step A4 ensures a set of size at least 2 is
  split into a singleton set (that will appear in the new left column)
  and the set difference (that will appear one place to its right).
  Since $C$ is upper triangular, the construction of $B$ ensures it is
  upper triangular.

  Furthermore, by construction, it can never be the case that on
  completion of all A1--A6, the entry $a+1$ appears above $a$ in the
  column to its right.  If it were, then rule A5 would not have
  been executed properly.  So the matrix $B$ is such that the highest
  indexed entry in every column is the highest entry in that column,
  $a$ say, is weakly above the smallest entry (with respect to the
  order on $\N$) in the subsequent column $a+1$ (which is also the
  lowest in that column). This condition absorbs Property~1 
  when one
  considers the final pair of columns and the entries $j$ and $j+1$.

  The replacement of all singleton sets with ones and empty sets with
  zeros results in a matrix $\ptm(P)$ with the following properties:
  \begin{itemize}
  \item $\ptm(P)$ contains $n$ ones and is upper triangular.
  \item There are no columns consisting of all-zeros, but there can be
    rows of all-zeros.
  \item The topmost one in every column is weakly above the bottommost
    1 in the column to its right.
  \end{itemize}
  Therefore, we have $\ptm(P) \in \newmatrices_n$.
\end{proof}

\begin{theorem}
  The mapping $\PosetMap:\newmatrices_n \to \WPoset_n$ is a bijection.
\end{theorem}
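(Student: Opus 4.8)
The plan is to prove that $\ptm$ is a two-sided inverse of $\PosetMap$. Since the two preceding theorems already give $\PosetMap\colon\newmatrices_n\to\WPoset_n$ and $\ptm\colon\WPoset_n\to\newmatrices_n$, it suffices to establish the two identities $\ptm(\PosetMap(A))=A$ for all $A\in\newmatrices_n$ and $\PosetMap(\ptm(P))=P$ for all $P\in\WPoset_n$.

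For the first identity I would begin by computing, in terms of $A$ alone, the partition matrix $C$ that Definition~\ref{bfromc} associates to the poset $P=\PosetMap(A)$. Let $t_0<t_1<\cdots<t_{k-1}$ be the rows of $A$ containing a $1$ (so $t_0=1$, since upper-triangularity forces the unique $1$ of column $1$ into row $1$), and let $m_s$ denote the number of $1$s in the first $s$ columns of $A$. The bottom-to-top, left-to-right relabelling of Definition~\ref{posdef} makes the strict downset of any element in row $t_i$ of $A$ equal to $\{1,\dots,m_{t_i-1}\}$; hence the distinct downsets of $P$ are $D_i=\{1,\dots,m_{t_i-1}\}$, the level set $L_i$ consists of the labels in row $t_i$, and the block $D_j\setminus D_{j-1}$ consists of the labels occurring in columns $t_{j-1},\dots,t_j-1$. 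Consequently $C$ is exactly the matrix obtained from $A$ by deleting its all-zero rows and amalgamating each of these consecutive column-blocks into a single column. The heart of the argument is then that one pass of steps A1--A6 peels the leftmost original column off such a block: step A2 selects the bottommost $1$ of that column, step A5 keeps moving the labels $\ell,\ell+1,\dots$ one column to the left for exactly as long as each lies strictly above the previous one in the same original column, and it halts precisely upon reaching the bottommost $1$ of the next original column in the block, which by Definition~\ref{matrixdefn}(c) lies weakly below --- hence not strictly above --- the topmost $1$ of the current column. Iterating, and checking that the $\dim(A)-k$ rows inserted by step A6 land exactly where the $\dim(A)-k$ deleted zero rows of $A$ were, gives $\ptm(\PosetMap(A))=A$.

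For the converse, $\PosetMap(\ptm(P))=P$, I would first prove the auxiliary fact that the labelled matrix $B$ built inside Definition~\ref{bfromc} has its singletons appearing, when read column by column from left to right and within each column from bottom to top, in the order $1,2,\dots,n$. This reduces to two points: the splitting steps preserve the property of a partition matrix that columns are weakly increasing in value from left to right; and, by the design of A5, when the process terminates the non-empty cells of each column hold a consecutive run of integers read from the bottom up. Since these runs then partition $[1,n]$ into intervals ordered by value, the stated reading order follows. Granting this, relabelling $\ptm(P)$ by the rule of Definition~\ref{posdef} reproduces the same matrix $B$, and it remains only to identify the relation ``$i$ north-west of $j$ with lower hook below the diagonal'' in $B$ with $i<_P j$; this is immediate once one notes that the rows of $B$ are indexed by the level sets of $P$ in increasing order of their downsets, so that the downset read off from $B$ for any element equals its true downset in $P$.

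The step I expect to be the main obstacle is the bookkeeping inside A1--A6 in the first identity: while the fact that each pass recreates one column of $A$ is cleanly controlled by Definition~\ref{matrixdefn}(c), one must verify that the empty rows introduced by A6 land in exactly the positions of the zero rows suppressed when passing from $A$ to $C$, and that the column-index shifts caused by A3 are tracked consistently across all $\dim(A)-k$ passes. All the ambient structural facts --- that $C$ is a genuine partition matrix carrying the extra weakly-$(3+1)$-free restriction, and that $\ptm(P)\in\newmatrices_n$ --- are already available from Theorem~\ref{welldefinedptm}, so this amounts to a careful finite case analysis rather than requiring a new idea.
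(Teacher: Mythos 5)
Your overall architecture (showing that $\ptm$ is a two-sided inverse of $\PosetMap$, which would subsume the paper's separate injectivity argument) is sound, and your identification of $C$ as the matrix obtained from $A$ by deleting its zero rows and amalgamating the column blocks delimited by the non-zero rows is correct. However, both of your composition identities hinge on claims about the loop A1--A6 that fail for $\ptm$ as literally defined, and the failure sits exactly in the part you deferred as bookkeeping. Concretely, take $A=\Mat{1&1&0\\0&0&1\\0&0&0}\in\newmatrices_3$, so that $P=\PosetMap(A)$ is the poset on $\{1,2,3\}$ whose only relation is $1<_P 3$. Its distinct downsets are $\emptyset$ and $\{1\}$, its levels are $L_0=\{1,2\}$ and $L_1=\{3\}$, and the associated matrix is $C=\Mat{\{1\}&\{2\}\\ \emptyset&\{3\}}$. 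Every cell of $C$ is already at most a singleton, so step A1 never fires, the loop does nothing, and $\ptm(P)=\Mat{1&1\\0&1}\neq A$; moreover $\PosetMap\bigl(\Mat{1&1\\0&1}\bigr)$ is the poset whose only relation is $1<_P 2$, so $\PosetMap(\ptm(P))\neq P$ as well, and in fact $\ptm$ sends the two distinct posets $\{1<_P2\}$ and $\{1<_P3\}$ of $\WPoset_3$ to the same matrix. This refutes both of your key auxiliary claims: it is not true that each pass of A1--A6 peels the leftmost original column off a block (no pass occurs when the amalgamated column of a block consists only of singleton cells), and it is not true that at termination the cells of each column form a consecutive run increasing from the bottom up (column $2$ of the $C$ above reads $3,2$ from the bottom).

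The root of the problem is that A1 is triggered only by a cell of size greater than one, whereas your argument needs splitting to be triggered whenever a column's entries fail to be singletons $\ell,\ell+1,\dots,\ell+t$ stacked bottom-to-top; in addition, A6 as stated (``between rows $i-1$ and $i$'') is inconsistent with the paper's worked example and with upper-triangularity, so the position of the inserted zero row --- which, as you rightly note, must reproduce the zero rows of $A$ --- is not pinned down by the text (the example, and correctness, require the new zero row to become the row indexed by the right-hand column of the split). To be fair, the paper's own proof shares this lacuna: it argues injectivity of $\PosetMap$ directly and surjectivity via $\PosetMap(\ptm(P))=P$, without addressing such cases; your route differs only in replacing the injectivity step by $\ptm(\PosetMap(A))=A$. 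To complete your proof you would first have to repair the splitting criterion and the row-insertion rule as above; with that done, your block analysis, combined with the observation that the row of label $j$ in $A$ is the unique $t$ for which columns $1,\dots,t-1$ contain exactly $|D(j)|$ ones, does yield both identities, but as written the argument has a genuine gap.
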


\begin{proof}
  We start by showing that $\PosetMap$ is injective. Suppose that $A$
  and $A'$ are two different matrices in $\newmatrices_n$. As there
  are $n$ 1s in each of the matrices $A$ and $A'$, and they are
  different, there must be at least two positions in which they
  differ.
  Consider the intermediate matrices $B$ and $B'$ in the
    construction and let $a$ be the smallest label that appears in a
    different position in $B$ and in $B'$.  Fix $i$, $i'$, $j$, $j'$ so
    that $B_{i,j}=a=B'_{i',j'}$.  If $i\neq i'$ then the strict
    downset of $a$ in $\PosetMap(A)$ is different to its strict
    downset in $\PosetMap(A')$.  If $i=i'$ then $j\neq j'$. Let us
    suppose that $j<j'$. We necessarily have $j'=j+1$ as otherwise the
    column $j+1$ would be empty in $B'$ and $A'$.  Consequently the
    label $a-1$ is in column $j$ in both $B$ and $B'$ (otherwise
    column $j$ would be empty in $B'$ and $A'$), and therefore $a-1$
    is in a row below row $i$.  However this implies that, in $B'$,
    the topmost entry in column $j$ (equal to $a-1$) is below the
    bottommost entry in column $j+1$ (equal to $a$), contradicting
    $A'\in \newmatrices_n$.  Therefore
    $\PosetMap(A) \neq \PosetMap(A')$.

To prove that $\PosetMap$ is surjective, we will show that
$\PosetMap(\ptm(P))=P$, thereby establishing $\ptm$ as the inverse of
$\PosetMap$.  Let $P \in \WPoset_n$ that has $k$ different levels
$L_0,\ldots,L_{k-1}$ and down sets $D_0,\ldots,D_{k-1}$.  Let
$M=\ptm(P)$ be the matrix that satisfies Definition~\ref{matrixdefn}.
Suppose that $Q=\PosetMap(\ptm(P)) = \PosetMap(M)$.

As $M \in \newmatrices_n$ we know, by Theorem~\ref{welldefinedptm},
that $Q \in \WPoset_n$.  The poset $Q=\PosetMap(M)$ that we construct
using Definition~\ref{posdef} is such that the level $j$ of the poset
$Q$ corresponds to the set of elements in the $(j+1)$th non-zero row of
$M$ once the labelleing of the 1s in $M$ that is described in
Definition~\ref{posdef} has taken place. Given that the $(j+1)$th
non-zero row of $M$ corresponded to the $j$th level of $P$, we have
that $L_{j+1}(Q) = L_{j+1}(P)$ for all $j$.  Let $i_1,\ldots,i_k$
denote the indices of the $k$ non-empty rows of $M$.  The elements of
$D_{j}(Q)$ are all of those matrix entries (with the labelling of
Definition~\ref{posdef}) weakly to the right of column $i_j$.  As $M$
was constructed from $P$ using a process of separating out columns
while creating empty rows, the $j$th downset of $D_j(P)$ will coincide
with that of $D_j(Q)$.
\end{proof}

Just as we did in the previous section, we can see how statistics
between these two sets are translated:

\begin{proposition}
Suppose that $M \in \newmatrices_n$ and $P=\PosetMap(M)$.
Then
\begin{itemize}
\item the sum of the top row of $M$ is the number of minimal elements in $P$,
\item the number of non-zero rows in $M$ equals the number of levels in the poset $P$.
\end{itemize}
\end{proposition}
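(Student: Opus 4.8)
The plan is to trace the three poset statistics back through Definition~\ref{posdef}. Recall that $\PosetMap$ first relabels $M$: the entries $1,2,\dots,n$ are written into the cells holding a $1$, proceeding column by column from left to right and, inside each column, from bottom to top. For a label $j$ let $\mathrm{col}(j)$ and $\mathrm{row}(j)$ be the column and row of its cell. The order is $i<_P j\iff\mathrm{col}(i)<\mathrm{row}(j)$, so the strict downset of $j$ is
$$
D(j)=\{\,i:\mathrm{col}(i)<\mathrm{row}(j)\,\}=\{\text{labels in columns }1,\dots,\mathrm{row}(j)-1\},
$$
and dually its strict upset is $U(j)=\{\text{labels in rows }\mathrm{col}(j)+1,\dots,\dim(M)\}$. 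Two features of $M$ will be used repeatedly: every column is nonempty by Definition~\ref{matrixdefn}(b), and, since $M$ is upper triangular, its unique $1$ in column $1$ lies in position $(1,1)$.

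For the minimal elements: $j$ is minimal iff $D(j)=\emptyset$, i.e.\ iff no label lies in a column of index $<\mathrm{row}(j)$; since column $1$ is nonempty, this holds exactly when $\mathrm{row}(j)=1$. So the minimal elements are the labels in the top row of the relabelled matrix, namely the $1$s of $M$ in its first row, and their number is the sum of the top row of $M$. For the number of levels: the level of $j$ is determined by $D(j)$, so I count distinct sets $D(j)$. If $\mathrm{row}(j)\le\mathrm{row}(j')$ then $D(j)\subseteq D(j')$, and since every column with index between $\mathrm{row}(j)$ and $\mathrm{row}(j')-1$ is nonempty the inclusion is strict unless $\mathrm{row}(j)=\mathrm{row}(j')$. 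Hence distinct levels correspond to the rows of $M$ that contain a $1$, i.e.\ to the nonzero rows of $M$.

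The maximal elements form the delicate case, which I expect to be the main obstacle. From the formula for $U(j)$, $j$ is maximal iff no label lies in a row strictly below $\mathrm{col}(j)$, i.e.\ iff $\mathrm{col}(j)\ge r$, where $r$ is the index of the bottommost nonzero row of $M$. Thus the maximal elements are precisely the $1$s of $M$ in columns $r,r+1,\dots,\dim(M)$, and the remaining (and only genuinely nontrivial) task is to match this set with the entries of the rightmost column: one must use the column-adjacency rule Definition~\ref{matrixdefn}(c) to argue that no $1$ strictly to the left of the last column can be maximal, so that the count collapses to the sum of the rightmost column of $M$. Unlike the first two bullets, whose matrix descriptions drop out of the downset formula immediately, this one needs real control of how rule (c) constrains the position of the bottommost nonzero row relative to the last column (and one should double-check the boundary behaviour when $M$ has an all-zero last row).
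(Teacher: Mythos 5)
Your arguments for the first two bullets are correct and are essentially the intended (unwritten) ones: the paper states this proposition without proof, and the downset formula $D(j)=\{\text{labels in columns }1,\dots,\mathrm{row}(j)-1\}$ together with the fact that every column is nonempty does give ``minimal $\Leftrightarrow$ top row'' and ``distinct levels $\Leftrightarrow$ nonzero rows'' exactly as you say.

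The gap you flag in the third bullet is, however, not closable by the route you propose, and you should not expect Definition~\ref{matrixdefn}(c) to rescue it. Your own characterisation is the correct one: $j$ is maximal iff $\mathrm{col}(j)\ge r$, where $r$ is the index of the bottommost nonzero row. Property (c) says nothing about trailing all-zero rows, and these do occur: take $M=\Mat{1&1\\0&0}\in\newmatrices_2$. Here $r=1<\dim(M)=2$, the poset $\PosetMap(M)$ is the two-element antichain, both elements are maximal, yet the rightmost column of $M$ sums to $1$. (The same happens for $\Mat{1&1&1\\0&0&0\\0&0&0}$, whose poset is a three-element antichain.) So the step ``no $1$ strictly to the left of the last column can be maximal'' is false, and indeed the third bullet as literally stated fails on these matrices; the last column of $M$ only captures the maximal elements carrying the largest labels (those left in the final column after the column-splitting that produces $M$ from the composition matrix), not all of them. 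What your argument actually proves is the corrected statement ``the number of maximal elements of $P$ equals the number of $1$s of $M$ lying in columns weakly to the right of the bottommost nonzero row'', which collapses to the rightmost-column sum precisely when $M$ has no all-zero bottom row. You should either record this counterexample and the corrected formulation, or add the hypothesis that the bottom row of $M$ is nonzero; as written, no argument can complete the third bullet.
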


\mynewpage
\section{Pattern-avoiding inversion sequences and enumeration}
\label{inversionssection}

The study of patterns in inversion sequences was recently considered by
Corteel et al.~\cite{corteel} and continued
throughout several papers \cite{Auliphd, AuliSergi1, AuliSergi2,
  elizalde, Lin, Mansour}. In a recent paper
Auli and Elizalde~\cite{elizalde} focused on vincular patterns in
inversion sequences. We recall some important definitions.

It is well known that a permutation of $[1,n]$ can be encoded by an integer
sequence $e_1e_2\ldots e_n$, where
$e_i$ is the number of
larger elements to the left of the entry $\pi_i$. It is easy to see
that every sequence $e_1e_2\ldots e_n$ with the property that $e_i \in [0,i-1]$ for all $i$
corresponds uniquely to a permutation.  Such a sequence is called an
\emph{inversion sequence}.

A \emph{vincular pattern} is a sequence $p = p_1 p_2 \ldots p_r$, where
some disjoint subsequences of two or more adjacent entries may be
underlined, satisfying $p_i \in [0,r-1]$ for each $i$, where
any value $j > 0$ can only appear in $p$ if $j -1$ appears as well.
The \emph{standardization} of a word $w = w_1w_2 \ldots w_k$ is the word
obtained by replacing all instances of the $i$th smallest entry of $w$
with $i$.

An inversion sequence $e$ \emph{avoids} the vincular pattern $p$ if
there is no subsequence $e_{i_1} e_{i_2}\ldots e_{i_r}$ of $e$ whose
standardization is $p$, and such that $i_{s+1} = i_{s} +1$ whenever
$p_{s}$ and $p_{s+1}$ are part of the same underlined
subsequence. $\mathcal{I}_n(p)$ denotes the set of inversion sequences
of size $n$ that avoid the vincular pattern $p$ and $I_n(p)$ denotes
the size of this set.

Auli and Elizalde~\cite{elizalde} showed that
$I_n(\underline{10}0)= I_n(\underline{10}1)$. The known numbers of this sequence
\cite[A336070]{OEIS} are
\[1, 1 ,2 ,6 ,23 ,106 ,567, 3440, 23286, 173704, 1414102.
\]
Given a sequence of integers $x=x_1\dots x_n$, we say
that there is a \emph{descent} at position $i$ if $x_i>x_{i+1}$; we
denote by
\[
  D(x) = \{i: x_i>x_{i+1}\}\quad\text{and}\quad
  \desbot(x) = \{x_{i+1}: i\in D(x)\}
\]
the set of descents and \emph{descent bottoms}, respectively.
The main result of this section is the following theorem.

\begin{theorem}\label{weak_ascent_elizalde}
  The number of length-$n$ weak ascent sequences is the same as the
  number of length-$n$ inversion sequences that avoid the vincular
  pattern $\underline{10}0$. Further, there is a descent preserving
  bijection between the two sets.
\end{theorem}

We prove Theorem~\ref{weak_ascent_elizalde} by establishing a
bijection $\phi$ between the sets $\Wasc_n$ and
$\mathcal{I}_n(\underline{10}0)$. First, we recall a crucial property
of the elements of $\mathcal{I}_n(\underline{10}0)$ from Auli and
Elizalde~\cite{elizalde}: For each $e=e_1\dots e_n\in \mathcal{I}_n(\underline{10}0)$,
\[
  e_n\in [0, n-1]\setminus \desbot(\bar{e}),
\]
where $\bar{e}=e_1\dots e_{n-1}$ denotes the inversion sequence that
we obtain by deleting the last entry of $e$.  Another important
observation is that the descent bottoms of $e$ are distinct elements.
Indeed, if $e_i$ and $e_j$, with $i<j$, are descent bottoms and
$e_i=e_j$, then the subword $e_{i-1}e_{i}e_j$ would form a
$\underline{10}0$ pattern.

On the other hand, for any weak ascent sequence
$w=w_1\dots w_n$ we have, by definition,  $w_n\in [0, \wasc(\bar{w})+1]$,
and $\wasc(\bar{w})= |\bar{w}| - 1 - |D(\bar{w})|$. That is,
\[
  w_n\in [0, n-1-|D(\bar{w})|].
\]

\begin{proof}[Proof of Theorem \ref{weak_ascent_elizalde}]
  Given $w=w_1\dots w_n$ in $\Wasc_n$ and $e=e_1\dots e_{n}$ in
  $\mathcal{I}_n(\underline{10}0)$, let $\bar{w}=w_1\dots w_{n-1}$
  and $\bar{e} = e_1\dots e_{n-1}$ (as above) so that we can write
  $w=\bar{w}w_n$ and $e=\bar{e}e_n$.  We shall define
  $\phi:\Wasc_n\to\mathcal{I}_n(\underline{10}0)$ using recursion. The
  base case, $n=0$, is that $\phi$ maps the empty word to the empty
  word. Assume $n\geq 1$. Let $\phi(\bar{w}w_n) = \bar{e}e_n$, where
  $\bar{e} = \phi(\bar{w})$ and $e_n$ is the $w_n$th element of
  $[0, n-1]\setminus \desbot(\bar{e})$ when listed in increasing
  order. In other words, if
  $[0, n-1]\setminus \desbot(\bar{e}) = \{\ell_0,\dots,\ell_{k}\}$
  and $\ell_0 < \cdots <\ell_{k}$, then $e_n = \ell_i$ with $i = w_n$.
  We claim that
  \begin{enumerate}
  \item\label{Claim1} $\phi$ is a well defined mapping,
  \item\label{Claim2} $\phi$ is bijective, and
  \item\label{Claim3} $\phi$ preserves descent; i.e.\ if $\phi(w)=e$, then $D(w)=D(e)$.
  \end{enumerate}
  Assume that $w\in \Wasc_n$ and $\phi(w)=e$. By definition we have
  $\phi(\bar{w})=\bar{e}$, and by induction we may further assume that
  $D(\bar{w})=D(\bar{e})$. We now prove each of the three claims
  separately.

  \textit{Proof of \eqref{Claim1}}. The only thing that can go wrong
  in the definition of $\phi$ is that the $w_n$th element of
  $[0, n-1]\setminus \desbot(\bar{e})$ may, a priori, not
  exist. Recall that, for $e\in \mathcal{I}(\underline{10}0)$, the
  descent bottoms of $e$ are distinct. Thus,
  $|\desbot(\bar{e})| = |D(\bar{e})|$. Further,
  $D(\bar{e}) = D(\bar{w})$ (by induction) and thus there are as
  many elements in $[0, n-1]\setminus \desbot(\bar{e})$ as there are
  in $[0, n-1-|D(\bar{w})|]$, which is the set that $w_n$ belongs to.

  \textit{Proof of \eqref{Claim2}}. Assume that
  $\phi(u) = \phi(v) = e$. Then $\phi(\bar{u}) = \phi(\bar{v})$ and,
  by induction, $\bar{u} = \bar{v}$. Also, if
  $[0, n-1]\setminus \desbot(\bar{e}) = \{\ell_0,\dots,\ell_{k}\}$ and
  $e_n = \ell_i$, then $u_n = i = v_n$. Thus $\phi$ is injective. To
  see that $\phi$ is surjective, let
  $e\in \mathcal{I}_n(\underline{10}0)$ be given. By induction there
  is a $\bar{w}$ in $\Wasc_{n-1}$ such that
  $\phi(\bar{w})=\bar{e}$. Further, with $e_n = \ell_i$, we let
  $w_n=i$. Then $\phi(w)=e$.

  \textit{Proof of \eqref{Claim3}}.  By the induction hypothesis we
  have $D(\bar{w}) = D(\bar{e})$. What remains to show is that
  $n-1\in D(w)$ if and only if $n-1\in D(e)$. Note that $e_{n-1}$ is a
  member of $[0,n-1]\setminus \desbot(\bar{\bar{e}})$; let this set be
  $\{\ell_0,\ldots, \ell_r\}$. Further, let $w_{n-1} = i$, so that
  $\phi$ maps $w_{n-1}$ to $e_{n-1} = \ell_i$. There are two
  possibilities: If $e_{n-1}$ is a descent bottom, then $e_n$ belongs
  to the set
  $[0,n]\setminus \{\desbot(\bar{\bar{e}})\cup\{e_{n-1}\}\} =
  \{\ell_0, \ell_1,\ldots, \ell_{i-1},\ell_{i+1}, \ldots, \ell_r,
  \ell_{r+1}\}$, where $\ell_{r+1}=n$. Let $j=w_n$. If $j<i$ then
  $n-1$ is both in $D(w)$ and $D(e)$.  If $j\geq i$, then
  $w_{n-1}\leq w_n$ and $n-1\not\in D(w)$, but then also
  $e_n \geq \ell_{i+1}>\ell_i$, and hence $n-1\not\in D(e)$. If $e_{n-1}$
  is not a descent bottom, then $e_n$ is from the set
  $[0,n]\setminus \desbot(\bar{\bar{e}}) = \{\ell_0, \ell_1,\ldots,
  \ell_r, \ell_{r+1}\}$, where $\ell_{r+1}=n$. In this case the
  statement is clear.
\end{proof}

\begin{example}
  With $w = 010101$ we find that $e = \phi(w) = 010213$ as detailed in
  the following table:
  \[
    \begin{array}{l|l|l|l}
      n & w & e & [0,n]\setminus\desbot(e)\\[0.5ex]\hline
      &&\\[-2ex]
      0 & \emptyword & \emptyword & 0 \\
      1 & 0          & 0          & 0,1 \\
      2 & 01         & 01         & 0,1,2 \\
      3 & 010        & 010        & 1,2,3 \\
      4 & 0101       & 0102       & 1,2,3,4\\
      5 & 01010      & 01021      & 2,3,4,5\\
      6 & 010101     & 010213     & 2,3,4,5,6\\
    \end{array}
  \]
  E.g.\ when constructing the row for $n=4$ we are given $w=0101$ and
  $\bar{e}=010$ (from the preceding row). The value of $e_4$ is
  $\ell_{w_4}=\ell_{1}$, where $(\ell_0,\ell_1,\ell_2)=(1,2,3)$ are the
  values in the last column of the preceding row. That is, $e_4=2$.
\end{example}

We give another set of inversion sequences that are also in one-to-one
correspondence with weak ascent sequences.  Let
$\mathcal{I}_n(D)$ denote the sequences of integers
$w = w_1\dots w_n$ with
$$w_i\in [0, i-1]\setminus D(w_1\dots w_{i-1}),$$
i.e., the set of length-$n$ inversion sequences where the positions of
descents are forbidden as entries.  Another way to describe
this is as the set of inversion sequences that contain only entries at
which positions a weak ascent occur.  The sequence $0102$ is not in
$\mathcal{I}_4(D)$, because $w_2$ is a descent top, hence
the value $2$ is forbidden.  All other length-4 inversion sequences
are in $\mathcal{I}_4(D)$.

While we do not give a formal proof of the following result, let us
note that it follows by an argument similar to that given for
Theorem~\ref{weak_ascent_elizalde} in conjunction with the bijection
$\Lambda$ from partition matrices to inversion sequences that was
given in \cite{partition_and_composition_matrices}.

\begin{proposition}
  The set $\mathcal{I}_n(D)$ is equinumerous with the set
  $\Wasc_n$.
\end{proposition}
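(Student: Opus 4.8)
The plan is to route the argument through the weakly $(3+1)$-free factorial posets. By the previous two sections we already have $\Wasc_n\cong\newmatrices_n\cong\WPoset_n$ (through $\MM$ and $\PosetMap$), so it suffices to biject $\WPoset_n$ with $\mathcal{I}_n(\mbox{posdt})$. Since an element $P$ of $\WPoset_n$ is a naturally labeled factorial poset, each of its strict downsets $D(i)=\{j:j<_P i\}$ is an initial segment $[1,c_i]$ with $c_i\in[0,i-1]$ and $c_1=0$. I would take the desired bijection to be $P\mapsto c(P)=(c_1,\dots,c_n)$: this is manifestly a length-$n$ inversion sequence, and $P\mapsto c(P)$ is a bijection from all factorial posets on $[1,n]$ onto all length-$n$ inversion sequences, because $P$ is recovered from $c(P)$ by the rule $i<_P j\iff i\le c_j$ and, conversely, every sequence $(c_1,\dots,c_n)$ with $c_i\in[0,i-1]$ defines a (necessarily factorial) poset. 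Up to the standard correspondence between factorial posets and partition matrices (\cite{factorialposets,compositionmatrices}), this map is the bijection $\Lambda$ of \cite{partition_and_composition_matrices}. The proposition therefore reduces to proving that $P\in\WPoset_n$ if and only if $c(P)\in\mathcal{I}_n(\mbox{posdt})$.

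I would prove this equivalence by induction on $n$, in the recursive style used for Theorem~\ref{weak_ascent_elizalde}. Building a factorial poset on $[1,n]$ from one on $[1,n-1]$ amounts to adjoining a new largest label $n$ whose strict downset is $[1,c_n]$ for an arbitrary $c_n\in[0,n-1]$; this alters no relation among $1,\dots,n-1$, and since $n$ is the largest label it can occur in a newly created special $3+1$ only as the top element. Unpacking the definition---translating each relation through $x<_P y\iff x\le c_y$ and discarding, by the natural labeling, the comparabilities that cannot hold---one finds that adjoining $n$ with $c_n=j$ creates a special $3+1$ precisely when the already-built code $(c_1,\dots,c_{n-1})$ has a descent at position $j$, i.e.\ $c_j>c_{j+1}$: the incomparability of $j+1$ with $n$ forces the middle element of the chain to be $j$, and a legal bottom element $i$ with $c_{j+1}<i\le c_j$ exists exactly when $c_{j+1}<c_j$. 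Hence $P\in\WPoset_n$ if and only if its restriction to $[1,n-1]$ is weakly $(3+1)$-free and $c_n\in\{0,1,\dots,n-1\}\setminus\{j:c_j>c_{j+1}\}$. Since the right-hand condition is exactly the recursive description of membership in $\mathcal{I}_n(\mbox{posdt})$---the legal values for a new last entry being the full range with the descent-top positions of the prefix deleted---the induction closes.

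I expect the only substantive work to be in that middle step: checking that the relations defining a special $3+1$ on $i<j<j+1<k$ (the chain $i<_P j<_P k$ together with $j+1$ incomparable to all of $i,j,k$) collapse to the two conditions $c_k=j$ and $c_{j+1}<i\le c_j$, and handling the boundary cases. In particular one must note that a descent at position $j$ forces $c_{j+1}\le j-2$, so the index $j+1$ can never itself carry a value equal to the descent-top position $j$, and that enlarging the descent set by a possible new descent at position $n-1$ does not shrink the legal range for $c_n$. These are short verifications, but this is where the content of the proposition sits. As a consistency check, for $n=4$ the unique factorial poset on $[1,4]$ that is not weakly $(3+1)$-free has code $0102$, which is the unique length-$4$ inversion sequence excluded from $\mathcal{I}_4(\mbox{posdt})$; accordingly Table~\ref{allfourseqs} records $|\Wasc_4|=23=4!-1$.
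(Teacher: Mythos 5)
Your proposal is correct, and it is worth noting that it supplies something the paper itself does not: for this proposition the paper only records a remark that the result ``follows by an argument similar to that given for Theorem~\ref{weak_ascent_elizalde} in conjunction with the bijection $\Lambda$'' from partition matrices to inversion sequences, i.e.\ a recursive, entry-by-entry matching of the legal-value sets for the last letter (as in the map $\phi$ of Theorem~\ref{weak_ascent_elizalde}), mediated by the matrix--inversion-sequence correspondence of \cite{partition_and_composition_matrices}. You instead route through $\Wasc_n\cong\newmatrices_n\cong\WPoset_n$ and then use the downset code $P\mapsto c(P)$, $D(i)=[1,c_i]$, which is the standard bijection between factorial posets and inversion sequences; the substantive content is your translation lemma, which I checked and which is right: a special $3+1$ on $\{i,j,j+1,k\}$ forces $c_k=j$ (from $j\le c_k$ together with the incomparability $j+1\not<_P k$) and $c_{j+1}<i\le c_j$, so such a configuration exists iff some entry of $c(P)$ equals a descent position of $c(P)$; the boundary checks you flag are exactly the needed ones (a descent at $j$ forces $c_{j+1}\le j-2$, so $k\ge j+2$ is automatic, position $1$ is never a descent since $c_1=0$, and no entry can equal a newly created descent position $n-1$ since all entries are at most $n-1$ and $c_n=n-1$ precludes a descent at $n-1$). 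What your route buys is an explicit, non-recursive structural bijection identifying $\mathcal{I}_n(\mbox{posdt})$ as precisely the set of codes of weakly $(3+1)$-free factorial posets (hence also transporting statistics), whereas the paper's sketched route buys brevity by reusing the generating-tree/legal-set argument of Theorem~\ref{weak_ascent_elizalde} and only yields equinumerosity via a recursively defined relabeling. Your $n=4$ sanity check (the unique bad code $0102$, giving $|\Wasc_4|=23=4!-1$) is consistent with Table~\ref{allfourseqs}.
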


Note that the inductive construction is very similar in each case,
weak-ascent sequences, inversion sequences avoiding the vincular
pattern, and the weak Fishburn permutations. In each case there is a
set of possible values for the $j$th entry that is determined in the
prefix of length $j-1$.  Auli and Elizalde~\cite{elizalde} use the
method of generating trees to derive an expression for the generating
function
\begin{align*}
 A(z)=\sum_{n\geq 0} I_n(\underline{10}0)z^n =\sum_{n\geq 0}I_n(\underline{10}1)z^n.
\end{align*}

\begin{proposition}[\cite{elizalde} Proposition 3.12]
  We have that $A(z) =G(1,z)$, where $G(u,z)$ is defined recursively
  by
  \begin{align*}
    G(u,z) = u(1-u)+uG(u(1+z-uz),z).
  \end{align*}
\end{proposition}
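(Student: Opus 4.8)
The plan is to prove the finer statement $G(u,z)=T(u,1,z)$, where $T(u,v,z)$ is the generating function of a generating tree for weak ascent sequences — equivalently, by Theorem~\ref{weak_ascent_elizalde}, for $\mathcal{I}_n(\underline{10}0)$ — carrying a two-component label; specialising $u=1$ then gives $G(1,z)=T(1,1,z)=\sum_{n\ge 0}|\Wasc_n|z^{n}=A(z)$.

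First I would set up the tree. Take the weak ascent sequences of length $n$ as the nodes at level $n$, the children of $w=(w_1,\dots,w_n)$ being its extensions $(w_1,\dots,w_n,v)\in\Wasc_{n+1}$, and label $w$ by the pair $(k,j)$ with $k=2+\wasc(w)$ — the number of admissible next letters, namely $v\in\{0,\dots,k-1\}$ — and $j=w_n$, the number of those admissible values lying strictly below the last letter. Appending $v<w_n$ creates a descent, leaving $\wasc$ and hence $k$ unchanged while the new last letter $v$ ranges over $0,\dots,j-1$; appending $v\ge w_n$ creates a weak ascent, so $k$ increases by one and the new last letter ranges over $j,\dots,k-1$. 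Thus
$$
(k,j)\ \rightsquigarrow\ (k,0),(k,1),\dots,(k,j-1),\ (k+1,j),(k+1,j+1),\dots,(k+1,k-1),
$$
a genuine succession rule with $0\le j\le k-1$ preserved, rooted at $(1,0)$ (the empty sequence, whose sole child $(0)$ has label $(2,0)$); one checks it reproduces the first rows of Table~\ref{allfourseqs}.

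Next I would pass to generating functions. With $T(u,v,z)=\sum u^{k}v^{j}z^{n}$ over all nodes, and using $\sum_{i=0}^{j-1}v^{i}=(v^{j}-1)/(v-1)$ and $\sum_{i=j}^{k-1}v^{i}=(v^{k}-v^{j})/(v-1)$, the children of a node of label $(k,j)$ contribute $\tfrac{z}{v-1}(u^{k}v^{j}-u^{k}+u^{k+1}v^{k}-u^{k+1}v^{j})$ times that node's monomial; summing (and recognising $\sum u^{k+1}v^{k}z^{n}=u\,T(uv,1,z)$ and $\sum u^{k}z^{n}=T(u,1,z)$) gives
$$
\bigl(v-1-z(1-u)\bigr)\,T(u,v,z)=(v-1)u-z\,T(u,1,z)+zu\,T(uv,1,z).
$$
Now apply the kernel method: setting $v=1+z-uz$ kills the kernel $v-1-z(1-u)$, so the right-hand side must vanish; since then $v-1=z(1-u)$ and $uv=u(1+z-uz)$, dividing by $z$ yields
$$
T(u,1,z)=u(1-u)+u\,T\bigl(u(1+z-uz),1,z\bigr),
$$
which is exactly the recursion defining $G$. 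A routine coefficient extraction shows this recursion has a unique formal power series solution (with $[z^{0}]=u$), so $G(u,z)=T(u,1,z)$ and $A(z)=T(1,1,z)=G(1,z)$.

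The real work is in the first step: confirming that the label $(k,j)$ closes up into a succession rule, which depends on knowing precisely which values are forbidden as a next letter. On the weak ascent side this is easy since $j$ is just the last letter, but on the inversion-sequence side $\mathcal{I}_n(\underline{10}0)$ the forbidden values are the descent bottoms, and one needs their distinctness (as recalled just before the proof of Theorem~\ref{weak_ascent_elizalde}) to identify the label; that is where the substance lies. Everything after the rule — forming the bivariate generating function, deriving the functional equation, and performing the kernel substitution — is mechanical.
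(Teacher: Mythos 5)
The paper gives no proof of this proposition at all: it is imported verbatim from Auli and Elizalde (their Proposition~3.12), who derived it by running a generating tree on $\mathcal{I}_n(\underline{10}0)$ and applying the kernel method. Your argument is correct and supplies a self-contained derivation within this paper's framework, so it is a genuine (if closely parallel) alternative: you build the generating tree on weak ascent sequences instead, with label $(k,j)=(2+\wasc(w),\,w_n)$, and your succession rule $(k,j)\rightsquigarrow(k,0),\dots,(k,j-1),(k+1,j),\dots,(k+1,k-1)$ with root $(1,0)$ is exactly right (it reproduces the level counts $1,2,6,23$ of Table~\ref{allfourseqs}); the functional equation $\bigl(v-1-z(1-u)\bigr)T(u,v,z)=(v-1)u-zT(u,1,z)+zuT(uv,1,z)$ checks out at low order, the substitution $v=1+z-uz$ is a legitimate formal-series specialisation, and the uniqueness step works because order-by-order the recursion gives $(1-u)g_n(u)=\text{data from lower orders}$ in an integral domain; the final identification $A(z)=T(1,1,z)$ uses Theorem~\ref{weak_ascent_elizalde}, which is proved earlier and independently, so there is no circularity. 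The trade-off between the two routes is as you say: Auli--Elizalde work directly on inversion sequences, where pinning down the label needs the distinctness of descent bottoms, whereas on the weak-ascent side $k$ and $j$ are read off immediately from $\wasc$ and the last letter, at the price of invoking the bijection of Theorem~\ref{weak_ascent_elizalde}. One cosmetic slip: the children of a node labelled $(k,j)$ at level $n$ contribute $\tfrac{z}{v-1}\bigl(u^{k}v^{j}-u^{k}+u^{k+1}v^{k}-u^{k+1}v^{j}\bigr)z^{n}$, i.e.\ the stated factor times $z^{n}$, not times the node's full monomial $u^{k}v^{j}z^{n}$; your subsequent summation identities and the displayed equation use the correct interpretation, so nothing breaks.
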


This expression and the bijection from the proof of
Theorem~\ref{weak_ascent_elizalde} imply that if we denote by $A_n$
the enumeration sequence that counts the number of weak ascent
sequences of length $n$, we have $A_n = \sum_{k=0}^n a_{n,k}$, where
$a_{n,k}$ is given by the following formula. The initial values
$a_{0,0} = 1$, $a_{n,0} = a_{0,k} = 0$ and
\begin{align}\label{a_n_k}
  a_{n,k} = \sum_{i=0}^n \sum_{j=0}^{k-1} (-1)^{j} \binom{k-j}{i}\binom{i}{j} a_{n-i,k-j-1}.
\end{align}

\begin{proposition}
  The number of weak ascent sequences of length $n$ having $k$ weak
  ascents is $a_{n,k+1}$.
\end{proposition}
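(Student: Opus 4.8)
The plan is to combine the bijection $\phi:\mathcal{I}_n(\underline{10}0)\to\Wasc_n$ from the proof of Theorem~\ref{weak_ascent_elizalde} with the Auli--Elizalde functional equation $G(u,z)=u(1-u)+uG\big(u(1+z-uz),z\big)$, and to obtain $a_{n,k}$ as a coefficient of $G$. The underlying point is that in the Auli--Elizalde set-up the catalytic variable $u$ records the generating-tree label: writing $G(u,z)=\sum_{n,c}g_{n,c}u^{c}z^{n}$, the exponent $c$ is the number of admissible one-step extensions of a size-$n$ object, which for $e\in\mathcal{I}_n(\underline{10}0)$ equals $n+1-\mathrm{des}(e)$ (using that the descent bottoms of such $e$ are distinct), and for a length-$n$ weak ascent sequence $w$ equals $2+\wasc(w)$. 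Here $\mathrm{des}$ denotes the number of positions $i$ with $x_i>x_{i+1}$, so that $\wasc(x)=(\text{length of }x)-1-\mathrm{des}(x)$.

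So first I would check that $\phi$ transports this statistic. By construction $\phi$ is built one entry at a time, carrying at each stage the set of admissible next values of $e$ bijectively, in an order-preserving way, onto that of $w=\phi(e)$; and the correspondence used in the proof of Theorem~\ref{weak_ascent_elizalde} is arranged so that an appended entry is a descent bottom of $e$ exactly when it is a descent bottom of $w$. Hence $\phi$ preserves descent positions, so that for $w=\phi(e)$ of length $n$ we have $2+\wasc(w)=n+1-\mathrm{des}(w)=n+1-\mathrm{des}(e)$; in particular $\phi$ is an isomorphism of generating trees, labels included. Therefore, for every $n\ge1$, $[z^{n}]G(u,z)=\sum_{w\in\Wasc_{n}}u^{\,2+\wasc(w)}$; that is, $g_{n,c}$ is the number of length-$n$ weak ascent sequences with $c-2$ weak ascents (the value $g_{0,1}=1$ being a degenerate boundary case).

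Next comes the coefficient extraction. Writing $\theta=u(1+z-uz)=u+uz(1-u)$ and expanding $\theta^{d}$ twice by the binomial theorem gives $\theta^{d}=\sum_{i,j}(-1)^{j}\binom{d}{i}\binom{i}{j}u^{\,d+j}z^{\,i}$. Substituting $G(\theta,z)=\sum_{m,d}g_{m,d}\theta^{d}z^{m}$ into the functional equation and reading off the coefficient of $u^{c}z^{n}$ for $n\ge1$ (the term $u(1-u)$ contributing only at $z^{0}$) yields
\begin{align*}
  g_{n,c}=\sum_{i}\sum_{j}(-1)^{j}\binom{c-j-1}{i}\binom{i}{j}\,g_{n-i,\,c-j-1},
\end{align*}
while the $z^{0}$ part gives $g_{0,1}=1$, $g_{0,c}=0$ for $c\neq1$, and this recursion in turn forces $g_{n,0}=g_{n,1}=0$ for $n\ge1$. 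Putting $a_{n,k}:=g_{n,k+1}$ converts these identities into precisely $a_{0,0}=1$, $a_{n,0}=a_{0,k}=0$, and the recursion~\eqref{a_n_k}. Together with the previous paragraph this shows that $a_{n,k}$ enumerates the length-$n$ weak ascent sequences by weak ascents, in the exponent convention $c=k+1$ (equivalently $k=1+\wasc$), which is the assertion.

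I expect the first step to be the real obstacle: making precise that $\phi$ preserves descent positions, and hence the generating-tree label. This is only implicit in the proof of Theorem~\ref{weak_ascent_elizalde}, and it needs care, because weak ascent sequences --- unlike the inversion sequences in $\mathcal{I}_n(\underline{10}0)$ --- may repeat descent-bottom values; what one really uses is that the chosen matching of admissible-value sets respects ``the new entry forms a descent'' on both sides, which propagates equality of $\mathrm{des}$ (and so of $\wasc$) by induction on length. Once that is secured, the remaining work --- the two binomial expansions and the shift $c=k+1$ --- is routine. (Alternatively one could sidestep $\phi$ by building a generating tree for $\Wasc$ directly and rederiving the functional equation, but going through the already-established bijection avoids the duplication.)
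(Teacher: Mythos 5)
Your argument is sound but follows a genuinely different route from the paper's. The paper's proof never touches the functional equation: it argues directly on weak ascent sequences, splitting off the final block containing no strict ascents, building that block from a length-$(n-i)$ sequence with $k-j-1$ weak ascents by choosing $i$ distinct values among the $k-j$ available ones (placed in decreasing order) and turning $j$ of the chosen positions into plateaux, and then invoking inclusion--exclusion to conclude that the counts $b_{n,k}$ satisfy the same recursion~\eqref{a_n_k}. You instead transport the refined statistic through $\phi$ and extract the recursion from $G(u,z)=u(1-u)+uG(u(1+z-uz),z)$; your expansion of $\theta^{d}$, the resulting identity for $g_{n,c}$, the boundary values, and the shift $c=k+1$ all check out (one also needs $g_{m,0}=0$, which holds since every term of $G$ carries a factor of $u$). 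Your route has the merit of explaining where formula~\eqref{a_n_k} actually comes from, which the paper leaves implicit; the paper's argument buys independence from the Auli--Elizalde machinery but is written rather sketchily.

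Two points need attention before your version is complete. First, the interpretation of $u$ as recording the generating-tree label (number of admissible one-step extensions) is nowhere stated in this paper, which only quotes $A(z)=G(1,z)$; it must be imported explicitly from Auli--Elizalde or rederived, and rederiving it for $\Wasc_n$ is not routine because the label $2+\wasc(w)$ alone does not determine the succession rule (the last letter is needed as a second catalytic parameter). Second, the descent-preservation of $\phi$ that you rightly single out does hold, and by exactly the mechanism you sketch: the only new descent bottom created when a letter is appended is that letter itself, so the number of admissible values lying strictly below any admissible choice is unchanged from one step to the next, and the order-preserving matching therefore sends descent-creating choices to descent-creating choices; this induction should be written out rather than asserted. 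Finally, your bookkeeping $k=1+\wasc$ is the one actually forced by the recursion as printed (for instance $a_{1,1}=1$, $a_{2,2}=2$, $a_{3,3}=5$, and $a_{n,n}$ is Catalan, counting the weakly increasing sequences, which have $n-1$ weak ascents), so the literal wording of the proposition is off by one in $k$; your convention is the correct reading, and the paper's own proof glosses over the same shift.
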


\begin{proof}
  Let $W_{n,k}$ denote the set of weak ascent sequences with $k$ weak
  ascents. After the last weak ascent in a weak ascent sequence  the
  entries are in decreasing order. Let us
  consider the objects where some of these descents are marked. More
  precisely, let $W_{n,k}^{(a)}$ be the set of weak ascent
  sequences of length $n$ with $k$ weak ascents, where there are $a$
  marked descents in the last maximal decreasing subsequence. For
  instance, $0012012115\underline{3}2\underline{0}$ is an element of
  $W_{13,7}^{(2)}$, because the last decreasing subsequence is
  $(5,3,2,0)$, from which $3$ and $0$ is marked. Let further
  $S_0=W_{n,k-1}\cup W_{n,k-2}\cup W_{n,k-2}^{(1)}$ and for
  $1\leq j\leq k-1$
  \begin{align*}
    S_j= W_{n,k-j-1}^{(j-1)}\cup W_{n,k-j-1}^{(j)}\cup W_{n,k-j-2}^{(j)}\cup W_{n,k-j-2}^{(j+1)}.
  \end{align*}

  It is clear then that
  \begin{align}\label{w_n,k}
    |W_{n,k-1}| = |S_0|-|S_1|+|S_2|-\cdots+(-1)^{k-1}|S_{k-1}|.
  \end{align}
  In order to enumerate the sets $S_j$, we describe the way
  of its elements are constructed.
  For a given $i$, $0\leq i\leq n$, consider the set of weak ascent sequences of length
  $n-i$ with $k-2$ weak ascents $W_{n-i,k-2}$.
  We want to augment a sequence from the set $W_{n-i,k-2}$ by a
  decreasing sequence of $i$ elements in order to obtain a valid weak
  ascent sequence of length $n$. Because of the definition of a weak
  ascent sequence, these elements are from the set
  $\{0,1,\ldots, k-1\}$, so we can choose from the $k$ possible values
  $i$ and attach to the underlying sequence in decreasing
  order. However, two cases can happen. If the greatest chosen value
  is greater than or equal to the $(n-i)$th entry, then we obtain a
  weak ascent sequence with $k-1$ weak ascents, hence an object from
  $W_{n,k-1}$ such that the last weak ascent is at the $(n-i+1)$th
  position. So, letting $i$ go from $1$ to $n$ we generated all the
  objects in $W_{n,k-1}$ exactly once.

  On the other hand, if the greatest chosen value is smaller than the
  $(n-i)$th entry, then we obtain a weak ascent sequence with $k-2$
  weak ascents. In this case we mark the $(n-i+1)$th entry, which is a
  descent in the last decreasing subsequence. Letting $i$ go from $1$
  to $n$, we obtain this way every object exactly once from the set
  $W_{n,k-2}^{(1)}$.

  Consider now the general set $S_j$. For $i\geq j$, take a weak
  ascent sequence of length $n-i$ with $k-j-2$ weak ascents. Choose
  from the $k-j$ available values $i$ and attach them in decreasing
  order to obtain a valid weak ascent sequence. After, choose from the
  $i$ values $j$ out, mark those that are descents. (It can happen
  that the first value is chosen and it is a weak ascent, in this case
  do not mark this entry.)

  If $i=j$, we do not have any free choice in this last case ($j$
  elements out of $i$), but there are two possibilities:
  \begin{itemize}
  \item[I.0] The greatest chosen value is greater than or equal to the
    $(n-i)$th entry. Then we obtained a weak ascent sequence with
    $k-j-1$ weak ascents with $j-1$ marks on all the descents of the
    last decreasing subsequence.
  \item[II.0] The greatest chosen value is smaller than the $(n-i)$th
    entry. Then we obtained a weak ascent sequence with $k-j-2$ weak
    ascents and $j$ marks on the last decreasing subsequence.
  \end{itemize}
  For $i>j$ there are four cases:
  \begin{itemize}
  \item[I.] The greatest chosen value is greater than or equal to the
    $(n-i)$th entry.
    \begin{itemize}
    \item[1.] If this greatest value was chosen among the $j$ (out of
      the $i$) then we obtain a weak ascent sequence with $k-j-1$ weak
      ascents and $j-1$ marked descents in the last decreasing
      subsequence.
    \item[2.] If this greatest value was not chosen among the $j$ (out
      of the $i$) then we obtain a weak ascent sequence with $k-j-1$
      weak ascents and $j$ marked descents in the last decreasing
      subsequence.
    \end{itemize}
  \item[II.] The greatest chosen value is smaller than the $(n-i)$th
    entry.
    \begin{itemize}
    \item[1.] If the greatest value was chosen among the $j$ then we
      obtain a weak ascent sequence with $k-j-2$ weak ascents and $j$
      marked descents in the last decreasing subsequence.
    \item[2.] If the greatest value was not chosen among the $j$ then
      mark this descent also. This way we obtain a weak ascent
      sequence with $k-j-2$ weak ascents and $j+1$ marked descents in
      the last decreasing subsequence.
    \end{itemize}
  \end{itemize}
  Letting $i$ go from $i=j$ to $n$ we have that in the cases I.0 and
  I.1 together all the objects in $W_{n,k-j-1}^{(j-1)}$ are
  constructed exactly once. Similarly, during the above construction
  we get in the case I.2 the objects in $W_{n,k-j-1}^{(j)}$, in the
  cases II.0 and II.1 the objects in $W_{n,k-j-2}^{(j)}$ and in the
  case II.2 the objects $W_{n,k-j-2}^{j+1}$.

  The counting formula for the size of the sets $S_j$ is
  straightforward from the construction:
  \begin{align}\label{s_j}
    |S_{j}| = \sum_{i=j}^{n} \binom{k-j}{i}\binom{i}{j}|W_{n-i,k-j-2}|.
  \end{align}
  and hence, by Equation (\ref{w_n,k}) we have
  \begin{align*}
    |W_{n,k-1}| = \sum_{j=0}^{k-1} (-1)^{j}  \sum_{i=j}^{n} \binom{k-j}{i}\binom{i}{j}|W_{n-i,k-j-2}|.
  \end{align*}
  We see that $|W_{n,k-1}|$ satisfies the same recurrence relation as
  $a_{n,k}$ in Equation (\ref{a_n_k}).
\end{proof}

Note that $a_{n,n}$ are the Catalan numbers, which is clear, since
weak ascent sequences that have only ascents are in a trivial
bijection for instance with Dyck paths.

\begin{remark}
  Since the sequence $A_n$ has a rapid growth, greater than
  $n^{{n}/{2}}$, the series $\sum_{n=0}^{\infty}A_nz^n$ converges
  only for $z=0$.  On the other hand, since $A_n \leq n!$, the
  exponential generating function
  $\sum_{n=0}^{\infty} A_n \frac{t^n}{n!}$ determines an analytic
  function on a certain domain. However, it could be difficult to
  represent it as a function by using classical functions. We did not
  manage to derive a nice closed formula for it.
\end{remark}

\mynewpage
\section{Concluding remarks}
Experimentation with restricted classes of weak ascent sequences has
shown that there are relationships to other known number sequences.
As an example, we offer the following simple Catalan result:
\begin{proposition}
  The number of weak ascent sequences $w=(w_1,\ldots,w_n)$ that are
  weakly-increasing, i.e. $ w_{i}\leq w_{i+1}$ for all $i$, is given
  by the Catalan numbers.
\end{proposition}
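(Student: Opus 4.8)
The plan is to exhibit a bijection between weakly-increasing weak ascent sequences of length $n$ and a Catalan family, most naturally the sequences $(w_1,\ldots,w_n)$ with $w_1=0$ and $0\le w_{i+1}\le w_i+1$ for all $i$ — that is, the ballot-type sequences counted by $C_n$. First I would observe that if $w=(w_1,\ldots,w_n)$ is weakly increasing then \emph{every} position $i\in[1,n-1]$ is a weak ascent, so $\wasc(w_1,\ldots,w_i)=i-1$ for each $i$. Hence the defining constraint $w_{i+1}\in[0,1+\wasc(w_1,\ldots,w_i)]=[0,i]$ becomes simply $w_{i+1}\le i$, together with $w_1=0$ and the monotonicity $w_i\le w_{i+1}$ that we have imposed.

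Next I would note that for a weakly-increasing sequence the bound $w_{i+1}\le i$ is implied by a local growth bound. Since $w_1=0$, an easy induction gives $w_i\le i-1$ automatically once we know $w_{i+1}\le w_i+1$; conversely $w_{i+1}\le w_i+1$ is \emph{not} forced by $w_{i+1}\le i$ alone for a general weakly increasing sequence, so some care is needed here. The cleaner route is therefore to not claim the two descriptions coincide verbatim, but instead to directly count weakly-increasing sequences $0=w_1\le w_2\le\cdots\le w_n$ subject to $w_i\le i-1$ for all $i$. These are exactly (after the shift $v_i=w_i$) the lattice paths / subdiagonal monotone sequences enumerated by the ballot numbers, and their total is $C_n$. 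Concretely, I would set up the standard bijection with Dyck paths: read the sequence left to right and record, for each $i$, the jump $w_{i+1}-w_i\ge 0$; translate a run of $0$'s and a unit increase into down-steps and up-steps of a Dyck path of semilength $n-1$ (or $n$, depending on normalization), checking that the constraint $w_i\le i-1$ corresponds precisely to the path staying weakly above the axis. The remark already in the paper — ``weak ascent sequences that have only ascents are in a trivial bijection for instance with Dyck paths'' — covers the strictly-increasing case; here I would adapt that same correspondence, allowing plateaux, so that a weakly-increasing weak ascent sequence maps to an arbitrary Dyck path of the appropriate size.

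The key steps in order: (1) show every position is a weak ascent, reducing the condition to $w_1=0$, $w_i\le i-1$, $w_i\le w_{i+1}$; (2) identify this set with a standard Catalan family — e.g.\ via the substitution that records the successive differences $d_i=w_{i+1}-w_i$ and the ``slack'' $i-1-w_i\ge 0$, producing a sequence of up/down steps that forms a Dyck path; (3) verify the map is a bijection by describing the inverse (recover $w$ by prefix sums of the difference data, and check the path's nonnegativity is equivalent to $w_i\le i-1$); (4) conclude the count is $C_n$.

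I expect the main obstacle to be a bookkeeping subtlety in step (2)–(3): making sure the upper bound $w_i\le i-1$ translates into exactly the Dyck (nonnegativity) condition and not something slightly off by one, and handling the boundary entries $w_1=0$ and the last entry correctly so that the semilength comes out right. There is also a minor point worth checking explicitly — that no \emph{additional} constraint sneaks in from the weak ascent sequence definition when plateaux are present (it does not, precisely because all positions are weak ascents once the sequence is monotone), and I would state that explicitly to avoid confusion with the more intricate role plateaux play elsewhere in the paper. Once that alignment is pinned down, the proof is short.
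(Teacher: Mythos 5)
Your proposal is correct and follows essentially the same route as the paper: you observe that monotonicity makes every position a weak ascent, so the definition imposes no constraint beyond $w_1=0$, $w_i\le w_{i+1}$ and $w_i\le i-1$, and then you identify these subdiagonal nondecreasing sequences as a standard Catalan family. The paper simply cites that this family is Catalan-enumerated, whereas you additionally sketch the Dyck-path bijection, but the underlying argument is the same.
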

\begin{proof}
 If a weak-ascent sequence is weakly
    increasing then there is no restriction on the entries, so the set
    of weakly-increasing weak ascent sequences is the same as the set
    of nondecreasing sequences of integers $a_i$ with
    $0\leq a_i\leq i$ which are known to be enumerated by the Catalan
    numbers.
\end{proof}
A slightly different restriction gives rise to the following conjecture:
\begin{conjecture}
  The number of weak ascent sequences $w=(w_1,\ldots,w_n)$ that satisfy
  $w_{i+1} \geq w_i -1$ for all $i$ equals
  OEIS~\cite[A279567]{OEIS} ``Number of length $n$ inversion sequences
  avoiding the patterns 100, 110, 120, and 210.''
\end{conjecture}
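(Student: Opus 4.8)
The plan is to establish the equinumerosity through a pair of generating trees and the functional equations they yield. I would begin with the left-hand side. For a weak ascent sequence $w=(w_1,\dots,w_m)$ satisfying $w_{i+1}\ge w_i-1$, the values $v$ for which $(w_1,\dots,w_m,v)$ is again such a sequence form exactly the interval $[\max(0,w_m-1),\,1+\wasc(w)]$, and appending $v$ turns the pair $(w_m,\wasc(w))$ into $(v,\,\wasc(w)+[v\ge w_m])$; hence this pair is a valid (infinite) label for a generating tree. Writing the label as $(p,z)=(w_m,\,1+\wasc(w)-w_m)$ one has $p\ge 0$, $z\ge 1$, root $(0,1)$, and the node $(p,z)$ has $z+1$ children if $p=0$ and $z+2$ children if $p\ge 1$: the non-weak-ascent option (taking $v=w_m-1$) sends $(p,z)$ to $(p-1,z+1)$, and the weak-ascent options ($v=w_m+\delta$) send it to $(p+\delta,\,z+1-\delta)$ for $\delta=0,\dots,z$.

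For the right-hand side I would derive the analogous rule for $\mathcal I_n(100,110,120,210)$. One first checks that an inversion sequence $e=(e_1,\dots,e_m)$ avoids these four patterns exactly when it contains no indices $i<j<k$ with $e_k<e_i$ and $e_k\le e_j$, and that under this description the admissible continuations $v$ of an avoider form an interval $[a_m,m]$, where the lower bound is non-decreasing and updates by $a_{m+1}=\max\bigl(a_m,\min(e_{m+1}+1,M_m)\bigr)$ with $M_m=\max_i e_i$, while $M_{m+1}=\max(M_m,e_{m+1})$. Consequently the pair $(\alpha,\beta)=(M_m-a_m,\,m-M_m)$, with $\alpha\ge 0$, $\beta\ge 1$, is a valid label; the root is again $(0,1)$, the node $(\alpha,\beta)$ has $\alpha+\beta+1$ children, with the continuations $v<M_m$ sending $(\alpha,\beta)$ to $(\alpha-1,\beta+1),\dots,(0,\beta+1)$, the choice $v=M_m$ sending it to $(0,\beta+1)$ once more, and the continuations $v>M_m$ sending it to $(1,\beta),(2,\beta-1),\dots,(\beta,1)$. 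It is worth noting that, although the two trees share the same level sizes (and even the same multiset of child-counts at each level), they are not isomorphic as trees — a discrepancy in the profiles of the length-four extensions is already visible among sequences of length five — so there is no bijection compatible with the left-to-right constructions; a bijective proof would have to be of a more global kind, perhaps routed through the matrices $\newmatrices_n$ of Section~\ref{matricessection} or the bijection $\Lambda$ of~\cite{partition_and_composition_matrices} by identifying the subclasses corresponding to the restricted sequences on each side.

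The final and decisive step is to turn each succession rule into a functional equation for a generating function $F(x,y;t)$, with $t$ marking the length and $x,y$ the two label coordinates, and to prove that the two specializations $F(1,1;t)$ coincide. Each equation carries two catalytic variables and neither seems to decouple, so the kernel-method bookkeeping is delicate; I would solve both by the kernel method — and, if the answers turn out to be algebraic, by techniques in the spirit of the quadratic method of Bousquet-M\'elou — expecting in each case to recover the generating function of OEIS~A279567, for which an expression is available from the systematic study of inversion-sequence pattern classes, or else to exhibit directly an algebraic change of variables carrying one functional equation into the other. This comparison of the two answers — rather than the solution of either one in isolation — is the step I expect to be the real obstacle.
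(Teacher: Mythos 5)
The statement you are attempting is left as a conjecture in the paper --- the authors supply no proof --- so the only question is whether your proposal settles it, and it does not. What you have is a correct and useful reduction, not a proof. Your succession rule for the restricted weak ascent sequences checks out: the admissible continuations of $w$ do form the interval $[\max(0,w_m-1),\,1+\wasc(w)]$, and with the label $(p,z)=(w_m,\,1+\wasc(w)-w_m)$ the children are as you state. Your analysis of $\mathcal{I}_n(100,110,120,210)$ is also sound: avoiding the four patterns is equivalent to having no $i<j<k$ with $e_k<e_i$ and $e_k\le e_j$, the admissible continuations form an up-interval, and the label $(\alpha,\beta)=(M_m-a_m,\,m-M_m)$ evolves as you describe. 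Indeed both rules start from $(0,1)$ and reproduce the counts $1,2,6,21,82,\dots$ at small levels, which is consistent with A279567 and with the $23-2=21$ restricted weak ascent sequences of length $4$ one reads off Table~\ref{allfourseqs}.

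The gap is that the decisive step --- showing that the two different succession rules generate the same counting sequence --- is exactly where you stop. You write the plan as ``solve both functional equations by the kernel method \dots expecting in each case to recover the generating function of A279567, or else to exhibit directly an algebraic change of variables,'' and you yourself flag this comparison as the real obstacle. But that comparison \emph{is} the conjecture: everything before it (interval structure of the admissible continuations, two catalytic labels, two functional equations in two catalytic variables each) is bookkeeping that transfers the problem without resolving it, and since the two trees are genuinely different (as you note, a label $(1,1)$ has children $(0,2),(1,2),(2,1)$ on one side and $(0,2),(0,2),(1,1)$ on the other), no soft argument closes the gap. Until one of the equations is actually solved, or an explicit kernel-method manipulation or change of variables carries one equation into the other, or a global bijection (e.g.\ through $\newmatrices_n$ or the map $\Lambda$ of~\cite{partition_and_composition_matrices}) is constructed, the statement remains a conjecture; your write-up should present the two succession rules as verified partial progress rather than as a proof.
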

The paper \cite{McNamara} probed restrictions on ascent sequences and
how such restrictions played out in the bijective correspondences.
The above proposition and conjecture represent a first step in that
direction for weak ascent sequences.

Research into pattern avoidance in ascent sequences (see Duncan and
Steingr\'{i}msson~\cite{Duncan}) proved to be a fruitful avenue of
research that produced a wealth of enumerative identities and
conjectures, some of which are still open.  The asymptotics of
generating functions for these has recently been investigated by
Conway et al.~\cite{conway}.  We posit that a similarly rich
collection of results are to be discovered by exploring pattern
avoidance for weak ascent sequences
and that equidistribution results hold for multivarite statistics
on weak ascent sequences in the same way they were shown to hold for
ascent sequences~\cite{Schlosser}.

\mynewpage
\section*{Acknowledgments}
The authors would like to thank Toshiki Matsusaka for his assistance
with Equation~\eqref{a_n_k}.

\end{document}